\documentclass[11pt]{article}
\usepackage{graphicx}
\usepackage{amsmath}
\usepackage{color}
\usepackage[english]{babel}
\usepackage{amssymb,amsfonts,amsthm}
\usepackage{graphicx}
\usepackage{tikz}
\usepackage{tkz-graph}
\usepackage{tkz-berge}
\DeclareGraphicsRule{.JPG}{eps}{*}{`jpeg2ps #1}
\usetikzlibrary{arrows.meta,shapes.arrows}
\usepackage{tikz}
\usepackage{tikz,epsf}
\newif\ifpddf
\ifx\pddfoutput\undefined \pddffalse \else \pddfoutput=1 \pddftrue \fi
\ifpddf \else \fi \textwidth = 5.5in \textheight = 8.5 in
\oddsidemargin = 0.0 in \evensidemargin = 0.0 in \topmargin = 0.0 in
\headheight = 0.0 in \headsep = 0.0 in
\parskip = 0.0in
\parindent = 0.0in
\newtheorem{theorem}{Theorem}

\newtheorem{corollary}[theorem]{Corollary}

\newtheorem{lemma}[theorem]{Lemma}

\newtheorem{observation}[theorem]{Observation}

\newtheorem{proposition}[theorem]{Proposition}

\begin{document}

\title{Independent double Roman domination in graphs}
\date{}
 \author{{\small Doost Ali Mojdeh}\thanks{Corresponding author}\ \ and\ \ {\small Zhila Mansouri} \\{\small Department of Mathematics,  University of Mazandaran,}\\ {\small Babolsar, Iran}\\
{\small $^*$damojdeh@umz.ac.ir}\\
{\small mansoury.zh@yahoo.com}
}

 \maketitle

\begin{abstract}
An independent double Roman dominating function (IDRDF) on a graph $G=(V,E)$ is a function $f:V(G)\rightarrow \{0,1,2,3\}$ having the property that if $f(v)=0$, then the vertex $v$ has at least two neighbors assigned $2$ under $f$ or one neighbor $w$ with assigned $3$ under $f$, and if $f(v)=1$, then there exists $w\in N(v)$ with $f(w)\geq2$ such that the positive weight vertices are independent. The weight of an IDRDF is the value $\sum_{u\in V}f(u)$. The independent double Roman domination number $i_{dR}(G)$ of a graph $G$ is the minimum weight of an IDRDF on G. We initiate
the study of the independent double Roman domination and show its relationships to both independent domination number (IDN) and
independent Roman $\{2\}$-domination number (IR2DN). We present several sharp bounds on the IDRDN of a graph $G$ in terms of the order of $G$, maximum degree and the minimum size of edge cover. Finally, we show that, any ordered pair $(a,b)$ is realizable as the IDN and IDRDN of some non-trivial tree if and only if $2a + 1 \le b \le 3a$.
\end{abstract}
\textbf{2010 Mathematical Subject Classification:} 05C69\\
\textbf{Keywords}: Independent double Roman domination, independent Roman $\{2\}$-domination, independent domination, graphs.

\section{Introduction and terminologies}

\ \ \ Let $G = (V,E)$ be a simple graph with the  vertex set $V=V(G)$  and the edge set $E=E(G)$. For any vertex $ v\in V$, the \textit{open neighborhood} of $v$ is the set
$N(v)=\{u\in V|uv\in E\} $ and the \textit{closed neighborhood} of $v$ is the set $N[v] = N(v)\cup \{v\}$. For a set $S\subseteq V$, the open neighborhood of $S$ is $N (S)=\bigcup_{v\in S}N(v)$ and the closed neighborhood of $S$ is $N[S]=N(S)\cup S$. We use \cite{West} as a reference for terminology and notation which are not defined here.

\ \ \ Let $f$ be a function that assigns a subset of $\{1,2\}$ to each vertex of $G$, that is, $f:V(G)\rightarrow \mathcal{P}\{1,2\}$ where $\mathcal{P}\{1,2\}$ is the power set of $\{1,2\}$. If for each vertex $v\in V(G)$ such that $f(v)=\emptyset$, we have $\bigcup_{u\in N(v)} f(u)=\{1,2\}$, then $f$ is called a {\em $2$-rainbow dominating function} (2RDF) of $G$. The weight of a 2RDF $f$ is defined as $f(V(G))=\sum_{v\in V(G)}|f(v)|$. For simplicity, a 2RDF $f$ on a graph $G$ will be represented by the ordered partition $f=(V_\emptyset^f,V_{\{1\}}^f,V_{\{2\}} ^f,V_{\{1,2\}}^f)$ of $V(G)$ induced by $f$, where
$V_\emptyset^f=\{u\in V(G)|f(u)=\emptyset\}$, $V_{\{1\}}^f=\{u\in V(G)|f(u)=\{1\}\}$, $V_{\{2\}}^f=\{u\in V(G)|f(u)=\{2\}\}$ and $V_{\{1,2\}}^f=\{u\in V(G)|f(u)=\{1,2\}\}$. A function $f:V(G)\rightarrow \mathcal{P}\{1,2\}$ is called an {\em independent $2$-rainbow dominating function} (I2RDF) of $G$ if $f$ is a 2RDF and no two vertices in $V(G)\setminus V_\emptyset^f$ are adjacent. The {\em independent $2$-rainbow domination number} (I2RDN) $i_{r2}(G)$ is the minimum weight of an I2RDF of $G$ (see \cite{CJ}). The $2$-rainbow domination was introduced by Bresar et al. in \cite{Bresar}, and has been studied by several authors, for example, see \cite{Chang} and \cite{Wu}.

\ \ A function $f:V(G)\rightarrow \{0,1,2\}$ is a {\em Roman dominating function} (RDF) on $G$ if every vertex $u\in V$ for which $f(u)=0$ is adjacent to at least one vertex $v$ for which $f(v)=2$. The weight of a RDF is the value $f(V(G))=\sum_{v\in V(G)}f(v)$. The {\em Roman domination number} $\gamma_{R}(G)$ is the minimum weight of a RDF on $G$. The Roman domination was introduced by Cockayne et al. in \cite{Cockayne}. Since 2004, so many papers have been published on this topic, where several new variations were introduced: {\em weak Roman domination}, {\em maximal Roman domination}, {\em mixed Roman domination}, and  recently, Roman $\{2\}$-domination (\cite{Chellali}) and double Roman domination (\cite{Beeler}). A {\em Roman $\{2\}$-dominating function} (R2DF) is a function $f:V(G)\rightarrow \{0,1,2\}$ with the property that for every vertex $v\in V$ with $f(v)=0$, $f(N(v))\geq 2$, that is, there is a vertex $u\in N(v)$, with $f(u)=2$, or there are two vertices $x,y\in N(v$) with $f(x)=f(y)=1$. The weight of a R2DF is the value $f(V(G))= \sum_{v\in V(G)}f(v)$, and the minimum weight of a R2DF is called the {\em Roman $\{2\}$-domination number} and denoted by $\gamma_{\{R2\}}(G)$.

\ \ A {\em double Roman dominating function} (DRDF) on a graph $G$ is a function $f:V(G)\rightarrow \{0,1,2,3\}$ having the property that if $f(v)=0$, then the vertex $v$ has at least two neighbors assigned $2$ under $f$ or one neighbor $w$ with $f(w)=3$, and if $f(v)=1$, then there exists $w\in N(v)$ such that $f(w)\geq2$. The weight of a DRDF is the value $f(V(G))=\sum_{u\in V}f(u)$. The {\em double Roman domination number} (DRDN) $\gamma_{dR}(G)$ of a graph $G$ is the minimum weight of a DRDF on G.
For  simplicity, a DRDF $f$ on a graph $G$ may be represented by the ordered partition $f=(V_0,V_1,V_2,V_3)$ of $V(G)$ induced by $f$, where
$V_i=\{u\in V(G)|f(u)=i\}$ for $0\leq i\leq3$.
 A DRDF $f=(V_0,V_1,V_2,V_3)$ is called {\em independent} if $V_1\cup V_2\cup V_3$ is an independent set in $G$. The {\em independent double Roman domination number} (IDRDN) $i_{dR}(G)$ is the minimum weight of an independent double Roman dominating function (IDRDF) on $G$.

\ \ \ In this work, we mainly present lower and upper bounds on IDRDN of graphs, as for example by the well-known result of Gallai (concerning the maximum matching and the minimum edge cover) we prove that $i_{dR}(G)\leq i_{\{R2\}}(G)+\beta'(G)$ in which $G$ is a graph of order $n$ with no isolated vertices and $\beta'(G)$ is the maximum size of an edge cover of $G$. We also prove that $2i(T)+1\leq i_{dR}(T)\leq3i(T)$ for all trees $T$ of order $n\geq2$ and show that all values between the lower and upper bounds are realizable.


\section{Preliminary results}

\ \ In this section, we obtain some basic results and give the exact formulas for the IDRDNs for some well-known graphs. We first show $i_{dR}$ is well-defined for all graphs.

\begin{proposition}\label{prop1}
Every graph $G$ has an IDRDF.
\end{proposition}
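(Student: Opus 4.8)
The plan is to exhibit an explicit IDRDF built from a maximal independent set. First I would invoke the standard fact that every graph $G$ has a maximal independent set $S$ (obtained, e.g., by a greedy argument on $V(G)$), and that any maximal independent set is a dominating set: if some vertex $v\notin S$ had no neighbor in $S$, then $S\cup\{v\}$ would be independent, contradicting maximality.

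Next I would define $f:V(G)\rightarrow\{0,1,2,3\}$ by $f(v)=3$ for $v\in S$ and $f(v)=0$ for $v\in V(G)\setminus S$. I then check the two defining conditions of a DRDF. For the first, take any $v$ with $f(v)=0$; then $v\notin S$, so by the domination property $v$ has a neighbor $w\in S$, and $f(w)=3$, which is exactly what the $f(v)=0$ case requires. The second condition concerns vertices of value $1$, and since $f$ takes no value $1$, it holds vacuously. Hence $f$ is a DRDF, with induced partition $(V_0,V_1,V_2,V_3)=(V(G)\setminus S,\emptyset,\emptyset,S)$.

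Finally I would verify independence: $V_1\cup V_2\cup V_3=S$, which is independent by construction, so $f$ is an IDRDF and $i_{dR}(G)$ is well-defined (indeed this also yields the crude bound $i_{dR}(G)\le 3|S|$). There is essentially no obstacle here; the only point worth a remark is the treatment of isolated vertices, but these cause no trouble since every isolated vertex lies in $S$ and is therefore assigned $3$, consistent with the fact that it imposes no domination requirement.
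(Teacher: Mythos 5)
Your proof is correct and uses exactly the same construction as the paper: take a maximal independent set $S$, assign $3$ to its vertices and $0$ elsewhere, and use maximality to guarantee domination. Your added verifications (the vacuous value-$1$ case and the isolated-vertex remark) are fine but only spell out what the paper leaves implicit.
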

\begin{proof}
Let $S$ be a maximal independent set of $G$. Then, every vertex in $V-S$ has at least one neighbor in $S$. Now the function
 $f:V(G)\rightarrow \{0,1,2,3\}$ which assigns $3$ to the vertices in $S$ and $0$ to the other ones is an IDRDF of $G$.
\end{proof}

\ \ In fact, Proposition \ref{prop1} guarantees that the IDRDF and therefore the IDRDN $i_{dR}(G)$ exists for all graphs $G$.

\ \ Since in any IDRDF $f=(V_0,V_1,V_2,V_3)$ the set $V_1\cup V_2\cup V_3$ is an independent set in $G$, so by the definition, $V_1=\emptyset$ for any $i_{dR}(G)$-function. It turns out to be useful in dealing with some results in this paper.

\begin{observation}\label{obser1}
In any IDRDF $f:V(G)\rightarrow \{0,1,2,3\}$,  $f(v)\neq1$ for all $v\in V(G)$.
\end{observation}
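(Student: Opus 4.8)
The plan is to argue by contradiction straight from the defining conditions of an IDRDF; since the conclusion is really a one-line observation, the ``proof'' amounts to combining two clauses of the definition. Suppose, for the sake of contradiction, that $f$ is an IDRDF on $G$ and that there is a vertex $v\in V(G)$ with $f(v)=1$.

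First I would invoke the ``$f(v)=1$'' clause in the definition of an IDRDF: since $f(v)=1$, there exists a vertex $w\in N(v)$ with $f(w)\geq 2$. In particular $w\in V_2\cup V_3$, so $f(w)>0$; and since $f(v)=1>0$ we have $v\in V_1$. Hence both $v$ and $w$ belong to the set $V_1\cup V_2\cup V_3$ of vertices of positive weight.

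Next I would invoke the independence requirement built into the definition of an IDRDF, namely that $V_1\cup V_2\cup V_3$ is an independent set of $G$. But $v$ and $w$ are adjacent (because $w\in N(v)$) and both lie in $V_1\cup V_2\cup V_3$, which contradicts the independence of that set. Therefore no vertex $v$ with $f(v)=1$ can exist, i.e.\ $f(v)\neq 1$ for every $v\in V(G)$.

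I do not expect any genuine obstacle here: the statement follows immediately from reading the ``$f(v)=1$'' condition against the independence condition, and it is precisely the reason, already flagged in the paragraph preceding the statement, that $V_1=\emptyset$ for every IDRDF (in particular for every $i_{dR}(G)$-function). The only thing to be careful about is not to over-formalize: one should resist introducing the partition $(V_0,V_1,V_2,V_3)$ as anything more than convenient notation, since the whole content is the adjacency $vw$ together with $v,w$ both having positive value.
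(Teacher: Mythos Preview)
Your argument is correct and is exactly the reasoning the paper intends: the paper does not give a separate proof but encodes it in the sentence just before the observation, namely that since $V_1\cup V_2\cup V_3$ is independent, the clause ``$f(v)=1$ forces a neighbour $w$ with $f(w)\ge 2$'' is incompatible with independence, so $V_1=\emptyset$. Your contradiction argument simply spells this out.
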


\ \ The DRDN of $P_n$ and $C_n$ were given in \cite{Ahangar}. The IDRDNs of $P_n$ and $C_n$ can be defined similar to those given in \cite{Ahangar} as follows.
\begin{proposition}\label{prop-2}
For $n\geq 1$,
\begin{equation*}
i_{dR}(P_{n})=\left\{\begin{array}{lll}
n&\mbox{if $n\equiv0$ \emph{(}mod\ 3\emph{)},}\vspace{1.5mm}\\
n+1&\mbox{otherwise.}
\end{array}
\right.
\end{equation*}
For $n\geq3$,
\begin{equation*}
i_{dR}(C_{n})=\left\{\begin{array}{lll}
n&\mbox{if $n\equiv0,2,3,4$ \emph{(}mod\ 6\emph{)},}\vspace{1.5mm}\\
n+1&\mbox{otherwise.}
\end{array}
\right.
\end{equation*}
\end{proposition}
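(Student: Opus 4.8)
The plan is to prove matching upper and lower bounds. Throughout I use Observation~\ref{obser1}: every IDRDF is of the form $f=(V_0,\emptyset,V_2,V_3)$ with $V_2\cup V_3$ independent and weight $w(f)=2|V_2|+3|V_3|$. A quick route to the lower bounds is that an IDRDF is in particular a DRDF, so $i_{dR}(P_n)\ge\gamma_{dR}(P_n)$ and $i_{dR}(C_n)\ge\gamma_{dR}(C_n)$, and the values of $\gamma_{dR}(P_n)$, $\gamma_{dR}(C_n)$ recorded in \cite{Ahangar} coincide with the right-hand sides above; I nevertheless indicate a self-contained argument.

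For the bound $w(f)\ge n$ with $G\in\{P_n,C_n\}$, partition $V_0=A\cup B$, where $A$ is the set of $0$-vertices having a neighbour in $V_3$ and $B=V_0\setminus A$. A vertex of $B$ has value $0$, no $3$-valued neighbour, and degree at most $2$, so it has exactly two neighbours, both in $V_2$. Counting edges between $V_3$ and $A$ gives $|A|\le\sum_{u\in V_3}\deg(u)\le 2|V_3|$, and counting edges between $V_2$ and $B$ gives $2|B|=\sum_{u\in V_2}|N(u)\cap B|\le\sum_{u\in V_2}\deg(u)\le 2|V_2|$, so $|B|\le|V_2|$. Hence $|V_0|=|A|+|B|\le|V_2|+2|V_3|$, i.e. $w(f)=2|V_2|+3|V_3|\ge|V_0|+|V_2|+|V_3|=n$. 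To locate the residue classes where $w(f)=n$ is attainable, note that equality forces both counting inequalities to be tight, hence every vertex of $V_3$ has degree $2$ with both neighbours in $A$, every vertex of $V_2$ has degree $2$ with both neighbours in $B$, and every vertex of $A$ has exactly one neighbour in $V_3$. A short propagation argument then shows that in $C_n$ either $V_3=\emptyset$, in which case $A=\emptyset$, $V_0=B$, and the cycle alternates $V_2,B,V_2,B,\dots$, forcing $n$ even; or $V_3\ne\emptyset$, in which case, starting at any vertex of $V_3$, the cyclic pattern $V_3,A,A,V_3,A,A,\dots$ is forced all the way around, forcing $3\mid n$. Thus $i_{dR}(C_n)=n$ requires $n\equiv 0,2,3,4\pmod 6$ and $i_{dR}(C_n)\ge n+1$ otherwise. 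In $P_n$ a degree-$1$ endpoint cannot lie in $B$ (too few neighbours), nor in $V_2\cup V_3$ (tightness would demand degree $2$), so it lies in $A$; the same propagation then forces $3\mid n$, so $i_{dR}(P_n)=n$ requires $3\mid n$ and $i_{dR}(P_n)\ge n+1$ otherwise.

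Finally I exhibit IDRDFs meeting these bounds. For $P_n=v_1v_2\cdots v_n$, set $f(v_i)=3$ when $i\equiv 2\pmod 3$ and $f(v_i)=0$ otherwise, and additionally reset $f(v_n)=2$ when $n\equiv 1\pmod 3$; a direct check shows this is an IDRDF of weight $n$ if $3\mid n$ and of weight $n+1$ otherwise (the case $n=1$ being immediate). For $C_n=v_1v_2\cdots v_nv_1$: if $3\mid n$ set $f(v_i)=3$ for $i\equiv 1\pmod 3$ and $0$ elsewhere; if $n$ is even set $f(v_i)=2$ for odd $i$ and $0$ elsewhere; both have weight $n$. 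If $n\equiv 1\pmod 6$ set $f(v_i)=3$ for $i\equiv 1\pmod 3$ with $i\le n-3$, set $f(v_{n-1})=2$, and $0$ elsewhere; if $n\equiv 5\pmod 6$ set $f(v_i)=3$ for $i\equiv 1\pmod 3$ with $i\le n-1$ and $0$ elsewhere; both have weight $n+1$. In each case one verifies that $V_2\cup V_3$ is independent and that every $0$-vertex has a $3$-valued neighbour or two $2$-valued neighbours. I expect the equality analysis to be the main obstacle: one must argue carefully that the period-$3$ structure forced by $V_3$ and the period-$2$ structure forced by $V_2$ cannot be mixed within a single cycle, and the endpoints of $P_n$ and the smallest values of $n$ need separate attention.
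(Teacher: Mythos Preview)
Your proposal is correct. The quick route you give first---bounding $i_{dR}$ below by $\gamma_{dR}$ and citing \cite{Ahangar}, then exhibiting IDRDFs of the required weight---is exactly the paper's argument, so in that sense your proof and the paper's coincide.

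Where you go beyond the paper is in the self-contained lower bound. The paper does not attempt this: it simply appeals to the known values of $\gamma_{dR}(P_n)$ and $\gamma_{dR}(C_n)$. Your edge-counting argument, partitioning $V_0$ into vertices with a $3$-neighbour and vertices forced to have two $2$-neighbours, cleanly gives $w(f)\ge n$, and your equality analysis (forcing the period-$3$ pattern $V_3,A,A,\ldots$ when $V_3\ne\emptyset$ and the period-$2$ pattern when $V_3=\emptyset$, and using the degree-$2$ tightness at endpoints of $P_n$) is sound and makes the proposition independent of \cite{Ahangar}. Your explicit constructions are also slightly more careful than the paper's in the $n\equiv 1\pmod 3$ path case, where assigning $2$ rather than $3$ to the last vertex is needed to hit weight exactly $n+1$. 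The only cosmetic point is that the caveat in your final sentence (``the period-$3$ and period-$2$ structures cannot be mixed'') is already handled by your propagation argument, since once a $V_3$-vertex is present the tightness conditions rule out ever reaching a vertex of $V_2$ or $B$; you could state this explicitly rather than flag it as an obstacle.
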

\begin{proof}
Consider the path $v_1\cdots v_n$. It is easy to see that the function  $f:V(G)\rightarrow \{0,2,3\}$ defined by $f(v_{3i+2})=3$ and $f(v_j)=0$ for other vertices if $n=0$ (mod $3$), and $f(v_{3i+2})=f(v_n)=3$ and $f(v_j)=0$ for the other vertices if $n=1,2 $ (mod $3$) is an IDRDF of $P_n$ with the weight $\gamma_{dR}(P_n)$. Since $\gamma_{dR}(P_n)\leq i_{dR}(P_n)$, it follows that $i_{dR}(P_n)=\gamma_{dR}(P_n)$.\\
For the cycle $C_n$, if we assign $2$ to the vertices with the even index and $0$ to others when $n$ is even,  if $n\equiv 3$ (mod $6$), then we assign $3$ to the vertices with the index $3i$ and $0$ to the others, if $n\equiv1$ (mod $6$) we assign $3$ to the vertices $v_{3i}$, $2$ to $v_1$ and $0$ to others,  and finally if $n\equiv 5$ (mod $6$), then we assign $3$ to the vertices $v_{3i}$ and $v_1$, and $0$ to others.
We also have $i_{dR}(C_n)=\gamma_{dR}(C_n)$.
\end{proof}

In what follows the IDRDNs of the complete graphs and complete $r(\geq2)$-partite graphs are given.

\begin{observation}\label{OBSE}
\emph{(i)} Let $G=K_{m_1,\cdots,m_r}$ be a complete $r(\geq2)$-partite  with size $m_1\leq\cdots\leq m_r$. Then,
\begin{equation*}
i_{dR}(G)=\left\{\begin{array}{lll}
3&\mbox{if\ $m_1=1$,}\vspace{1.5mm}\\
2 m_1 &\mbox{otherwise.}
\end{array}
\right.
\end{equation*}
\emph{(ii)} $i_{dR}(K_n)=3.$\vspace{1mm}

\emph{(iii)} $i_{dR}(G)=3$ if and only if $\Delta(G)=n-1$.
\end{observation}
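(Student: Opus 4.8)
The plan is to prove the three parts together, exploiting that (ii) is the all-singleton case of (i) and that both (i) and (ii) collapse to (iii) whenever a smallest part has size one. The backbone is a single general lower bound: I would first observe that $i_{dR}(G)\ge 3$ for every graph $G$ of order $n\ge 2$. Indeed, by Observation~\ref{obser1} no vertex is assigned $1$, so every positive-weight vertex is assigned $2$ or $3$; hence an IDRDF of weight at most $2$ would have to consist of a single vertex of weight $2$ together with zeros everywhere else, and then any other vertex (one exists since $n\ge 2$) is a zero vertex with no neighbour of weight $3$ and at most one neighbour of weight $2$, violating the IDRDF condition. After this, proving ``$i_{dR}=3$'' only requires exhibiting an IDRDF of weight $3$.

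I would then handle (iii). For $\Delta(G)=n-1\Rightarrow i_{dR}(G)=3$, assign $3$ to a vertex $v$ of degree $n-1$ and $0$ to every other vertex: the positive-weight set is $\{v\}$ and hence independent, and every other vertex has $v$ as a neighbour of value $3$, so this is an IDRDF of weight $3$, which is optimal by the paragraph above. For the converse, let $f=(V_0,V_1,V_2,V_3)$ be an $i_{dR}(G)$-function of weight $3$. Observation~\ref{obser1} gives $V_1=\emptyset$, and since the values on $V_2\cup V_3$ are $2$'s and $3$'s summing to $3$, the only possibility is $V_2=\emptyset$ and $V_3=\{v\}$ for a single vertex $v$. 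Then every vertex of $V_0=V(G)\setminus\{v\}$ has no neighbour of weight $2$, so it must have the neighbour $v$ of weight $3$; thus $v$ is adjacent to all other vertices and $\Delta(G)=\deg(v)=n-1$.

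Finally I would derive (i) and (ii). Write $G=K_{m_1,\dots,m_r}$ with parts $A_1,\dots,A_r$, $|A_i|=m_i$, $m_1\le\cdots\le m_r$ and $r\ge 2$. If $m_1=1$, the single vertex of $A_1$ is adjacent to all other vertices, so $\Delta(G)=n-1$ and (iii) gives $i_{dR}(G)=3$; choosing all parts of size $1$ recovers $i_{dR}(K_n)=3$, which is (ii). If $m_1\ge 2$, then assigning $2$ to every vertex of $A_1$ and $0$ elsewhere is an IDRDF (the part $A_1$ is independent, and each vertex outside $A_1$ is adjacent to all $m_1\ge 2$ vertices of $A_1$), so $i_{dR}(G)\le 2m_1$. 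For the matching lower bound, take any IDRDF $f=(V_0,V_1,V_2,V_3)$; by Observation~\ref{obser1} $V_1=\emptyset$, and $V_2\cup V_3$ is a nonempty independent set, hence lies inside a single part $A_j$ (two vertices of a complete multipartite graph are non-adjacent exactly when they share a part). If some $u\in A_j$ had $f(u)=0$, then all neighbours of $u$ would lie in $V(G)\setminus A_j\subseteq V_0$ and $u$ would be undominated; hence $A_j\subseteq V_2\cup V_3$, so the weight is $\sum_{w\in A_j}f(w)\ge 2|A_j|\ge 2m_1$. Therefore $i_{dR}(G)=2m_1$.

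The step I expect to be the main obstacle is the lower bound in (i) when $m_1\ge 2$: one must argue both that every positive-weight vertex lies in one and the same part and that this \emph{entire} part is forced to carry positive weight (otherwise a zero vertex inside it has no admissible neighbour), since only the two facts together pin the weight down to $2m_1$. Everything else is either an explicit assignment or an immediate consequence of Observation~\ref{obser1} together with the universal lower bound $i_{dR}\ge 3$. The one caveat to keep in mind is the degenerate graph $K_1$, for which $i_{dR}(K_1)=2$, so all three statements should be read under the standing assumption $n\ge 2$.
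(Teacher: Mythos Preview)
Your proof is correct and complete. The paper itself states this result as an observation without proof, so there is nothing to compare against; your route---first establishing the universal lower bound $i_{dR}(G)\ge 3$ for $n\ge 2$, then settling (iii), and deducing (i) and (ii) from it, with the key step that the positive-weight set of any IDRDF on a complete multipartite graph must fill an entire part---is a clean and natural way to justify the observation, and your caveat about $K_1$ is apt.
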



\section{Independent double Roman and Independent Roman $\{2\}$-domination}

\ \ In this section, we establish some relationships between the IDRDN and IR2DN in graphs.

\begin{proposition}\label{prop+} For any graph $G$,
$$\frac{3}{2}i_{\{R2\}}(G)\leq i_{dR}(G) \le 2i_{\{R2\}}(G)$$
and these bounds are sharp.
\end{proposition}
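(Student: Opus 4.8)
The plan is to prove the two inequalities separately, in each case by transforming an optimal function of one type into a feasible function of the other type and tracking the weight.

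First I would establish the upper bound $i_{dR}(G) \le 2\,i_{\{R2\}}(G)$. Let $g$ be an $i_{\{R2\}}(G)$-function with independent positive-weight set $S = \{v : g(v) \ge 1\}$. Define $f$ by doubling: $f(v) = 2g(v)$ for all $v$. Then $f$ has weight $2\,i_{\{R2\}}(G)$, and $V_1^f = \emptyset$ while $V_2^f \cup V_3^f = S$ is still independent, so the independence condition is automatic; in fact $f$ only takes values in $\{0,2\}$. It remains to check the double-Roman condition at a vertex $v$ with $f(v) = 0$, i.e. $g(v) = 0$: the R2DF condition gives either a neighbor $u$ with $g(u) = 2$, hence $f(u) = 4$ — wait, that is out of range, so instead I would set $f(v) = \min(2g(v), 3)$, or more cleanly map $0 \mapsto 0$, $1 \mapsto 2$, $2 \mapsto 3$. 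With this map, weight increases by at most a factor $3/2 \cdot$(number of $2$'s)$\,+\,2\cdot$(number of $1$'s), which is at most $2\,g(V)$; and if $g(v)=0$ then either some neighbor has $g=2 \Rightarrow f=3$ (good), or two neighbors have $g=1 \Rightarrow f=2$ each (good), so $f$ is an IDRDF. The independence of $S$ is preserved since the support is unchanged. This gives the claimed upper bound.

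Next the lower bound $\frac{3}{2} i_{\{R2\}}(G) \le i_{dR}(G)$. Let $f = (V_0, V_1, V_2, V_3)$ be an $i_{dR}(G)$-function; by Observation~\ref{obser1} we have $V_1 = \emptyset$. Define $g$ by $g(v) = 2$ if $f(v) = 3$, $g(v) = 1$ if $f(v) = 2$, and $g(v) = 0$ if $f(v) = 0$ — essentially the reverse of the map above. The support of $g$ equals $V_2 \cup V_3$, which is independent, so $g$ will be an I2RDF-style independent function once we verify the R2DF condition. If $g(v) = 0$ then $f(v) = 0$, so $v$ has a neighbor in $V_3$ (giving a neighbor with $g = 2$) or two neighbors in $V_2$ (giving two neighbors with $g = 1$); either way $g(N(v)) \ge 2$. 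Hence $g$ is an IR2DF, so $i_{\{R2\}}(G) \le g(V) = 2|V_3| + |V_2|$. On the other hand $i_{dR}(G) = f(V) = 3|V_3| + 2|V_2| \ge \frac{3}{2}\big(2|V_3| + |V_2|\big) = \frac{3}{2}\,g(V) \ge \frac{3}{2}\,i_{\{R2\}}(G)$, where the middle inequality uses $3|V_3| + 2|V_2| \ge 3|V_3| + \tfrac{3}{2}|V_2|$. This completes the lower bound.

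Finally, for sharpness I would exhibit graphs attaining each bound. For the lower bound, a graph where some optimal IDRDF uses only value $3$ on an independent dominating set and nothing else — e.g. $K_n$ or any $G$ with $\Delta(G) = n-1$, where $i_{dR}(G) = 3$ by Observation~\ref{OBSE} and $i_{\{R2\}}(G) = 2$, giving equality $3 = \frac{3}{2}\cdot 2$. For the upper bound, I would look for a graph where the optimal IR2DF uses only $1$'s (so that doubling costs a full factor $2$), such as a suitable star-like or path-like example; concretely $P_2$ or more generally graphs where $i_{\{R2\}}$ is realized by an independent set of $2$'s replaced by pairs of $1$'s. The main obstacle is not the inequalities, which are routine once the value-by-value correspondence $0\leftrightarrow 0$, $2\leftrightarrow 1$, $3\leftrightarrow 2$ is set up, but rather pinning down clean infinite families (or at least convincing small examples) that demonstrate both bounds are sharp simultaneously in the sense the proposition claims.
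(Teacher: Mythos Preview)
Your argument for the two inequalities is essentially the paper's: the value correspondence $0\leftrightarrow 0$, $1\leftrightarrow 2$, $2\leftrightarrow 3$ between IR2DFs and IDRDFs, together with the trivial arithmetic $2|V_1|+3|V_2|\le 2(|V_1|+2|V_2|)$ and $3|V_3|+2|V_2|\ge \tfrac{3}{2}(2|V_3|+|V_2|)$. After your false start with literal doubling you land on exactly the same construction, and your verification of the double-Roman and Roman~$\{2\}$ conditions is correct. Your lower-bound sharpness example ($\Delta(G)=n-1$) also matches the paper.

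The one genuine gap is your upper-bound sharpness example: $P_2$ does \emph{not} work. For $P_2=K_2$ one has $i_{\{R2\}}(P_2)=2$ (assign $2$ to one endpoint; you cannot use two $1$'s since the positive-weight set must be independent) and $i_{dR}(P_2)=3$, so $i_{dR}(P_2)=3<4=2\,i_{\{R2\}}(P_2)$. Your stated intuition --- look for a graph whose optimal IR2DF uses only the value~$1$ --- is the right one, but $P_2$ does not satisfy it. The paper uses $\overline{K_n}$ (where every vertex is isolated, so the unique optimal IR2DF assigns $1$ everywhere, giving $i_{\{R2\}}=n$ and $i_{dR}=2n$), and also the graph on at least four vertices with two nonadjacent vertices $u,v$ each adjacent to all others (where assigning $1$ to $u$ and $v$ gives $i_{\{R2\}}=2$ and assigning $2$ to both gives $i_{dR}=4$).
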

\begin{proof}
Let $f=(V_0,V_1,V_2)$ be an $i_{\{R2\}}(G)$-function with $i_{\{R2\}}(G)=|V_1|+2|V_2|$. Then, $g=(V'_0=V_0,V'_2=V_1,V'_3=V_2)$ is an IDRDF of $G$.
Therefore $i_{dR}(G) \le 2|V_1|+3|V_2|\le 2(|V_1|+2|V_2|)=2i_{\{R2\}}(G)$.\\
The graph $\overline{K_n}$ and a graph of order at least $4$ with two independent vertices $u$ and $v$ such that all the other vertices are adjacent to both $u$ and $v$ are graphs that achieve the upper bound.

\ \ \ In order to prove the lower bound we let $f=(V_0^f,V_2^f,V_3^f)$ be an $i_{dR}(G)$-function. If $g=(V'_0=V_0^f,V'_1=V_2^f, V'_2=V_3^f)$, then $g$ is an IR2DF of $G$ with $w(g)=|V_2^f|+2|V_3^f| \le\frac{2}{3}(2|V_2^f|+3|V_3^f|)=\frac{2}{3}i_{dR}(G)$.

Using Part (iii) of Observation \ref{OBSE}, we have $\frac{3}{2}i_{\{R2\}}(G)=i_{dR}(G)=3$ for all graphs $G$ with $\Delta(G)=n-1$. So, the lower bound is sharp.
\end{proof}

As an immediate result we have,
\begin{corollary}
For every graph $G$, $i_{\{R2\}}(G) < i_{dR}(G)$.
\end{corollary}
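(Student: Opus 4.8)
The plan is to read off the corollary directly from the lower bound already proved in Proposition \ref{prop+}. That proposition gives $\frac{3}{2}i_{\{R2\}}(G)\leq i_{dR}(G)$, so it suffices to observe that the factor $\frac{3}{2}$ is enough to turn the inequality strict. Concretely, the whole argument reduces to showing that $\frac{3}{2}i_{\{R2\}}(G)>i_{\{R2\}}(G)$, which is the same as showing $i_{\{R2\}}(G)>0$.

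First I would establish positivity of the independent Roman $\{2\}$-domination number. For any (nonempty) graph $G$, the all-zero assignment is not an R2DF: at any vertex $v$ one would need $f(N(v))\geq 2$, which fails when every value is $0$. Hence every R2DF, and in particular every $i_{\{R2\}}(G)$-function, has positive integer weight, so $i_{\{R2\}}(G)\geq 1$. (The only case to exclude is the degenerate graph with no vertices, where both parameters are $0$; under the usual convention that graphs are nonempty this does not arise.)

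Next I would chain the two facts. Writing $\frac{3}{2}i_{\{R2\}}(G)=i_{\{R2\}}(G)+\frac{1}{2}i_{\{R2\}}(G)$ and using $i_{\{R2\}}(G)\geq 1$ gives $\frac{3}{2}i_{\{R2\}}(G)\geq i_{\{R2\}}(G)+\frac{1}{2}>i_{\{R2\}}(G)$. Combining this with the lower bound from Proposition \ref{prop+} yields $i_{dR}(G)\geq \frac{3}{2}i_{\{R2\}}(G)>i_{\{R2\}}(G)$, which is exactly the asserted strict inequality $i_{\{R2\}}(G)<i_{dR}(G)$.

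I expect no real obstacle here, since the statement is flagged as an immediate consequence: the only points to verify are the strict positivity of $i_{\{R2\}}(G)$ and that the constant $\frac{3}{2}$ strictly exceeds $1$, both of which are routine. The single thing to be careful about is that the strictness genuinely requires $i_{\{R2\}}(G)>0$ rather than merely $i_{\{R2\}}(G)\geq 0$, so I would make sure to state explicitly that $G$ is nonempty (equivalently, has at least one vertex), which is the standing assumption for all the domination parameters in this paper.
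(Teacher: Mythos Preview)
Your argument is correct and matches the paper's intent: the corollary is stated there as an immediate consequence of Proposition~\ref{prop+} with no further proof, and the only missing observation is exactly the one you supply, namely that $i_{\{R2\}}(G)\ge 1$ for any graph with at least one vertex, which makes the factor $\tfrac{3}{2}$ force strictness.
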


\begin{proposition}\label{prop4}
For any graphs $G$, $i_{dR}(G)\leq 2i_{r2}(G)$. This bound is sharp.
\end{proposition}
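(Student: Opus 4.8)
The plan is to start from an $i_{r2}(G)$-function $f=(V_\emptyset^f,V_{\{1\}}^f,V_{\{2\}}^f,V_{\{1,2\}}^f)$ realizing the independent $2$-rainbow domination number, and to convert it into an IDRDF whose weight is at most twice $w(f)=|V_{\{1\}}^f|+|V_{\{2\}}^f|+2|V_{\{1,2\}}^f|$. The natural candidate is to define $g:V(G)\to\{0,1,2,3\}$ by $g(v)=0$ if $f(v)=\emptyset$, $g(v)=2$ if $|f(v)|=1$ (i.e. $v\in V_{\{1\}}^f\cup V_{\{2\}}^f$), and $g(v)=3$ if $f(v)=\{1,2\}$. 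Then the weight of $g$ is $2|V_{\{1\}}^f|+2|V_{\{2\}}^f|+3|V_{\{1,2\}}^f|\le 2(|V_{\{1\}}^f|+|V_{\{2\}}^f|+2|V_{\{1,2\}}^f|)=2i_{r2}(G)$, so the bound follows once we check $g$ is an IDRDF.

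The independence condition is immediate: the set of vertices with positive $g$-value is exactly $V(G)\setminus V_\emptyset^f$, which is independent because $f$ is an I2RDF. The double Roman condition needs to be verified at each vertex $v$ with $g(v)=0$, i.e. $f(v)=\emptyset$. By the $2$-rainbow property, $\bigcup_{u\in N(v)}f(u)=\{1,2\}$. Two cases arise: either some neighbor $u$ has $f(u)=\{1,2\}$, in which case $g(u)=3$ and $v$ is double Roman dominated by a single neighbor of value $3$; or no neighbor has both colors, in which case the colors $1$ and $2$ are supplied by (at least) two \emph{distinct} neighbors $u_1,u_2$ with $|f(u_i)|=1$, whence $g(u_1)=g(u_2)=2$ and $v$ has two neighbors of value $2$. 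Either way the requirement for $g(v)=0$ is met, and since $g$ never takes the value $1$ there is nothing further to check. Hence $g$ is an IDRDF and $i_{dR}(G)\le w(g)\le 2i_{r2}(G)$.

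For sharpness I would exhibit a family attaining equality; the graph $\overline{K_n}$ is the cleanest example, since then $i_{r2}(\overline{K_n})=n$ (each isolated vertex must receive a nonempty set, of size at least $1$) and $i_{dR}(\overline{K_n})=2n$ (each isolated vertex needs value at least $2$, and value $2$ suffices), so $i_{dR}=2i_{r2}$. Alternatively one can reuse the "two universal independent vertices" graph used in Proposition~\ref{prop+}. I do not expect a serious obstacle here; the only mild subtlety is making sure that when no neighbor of $v$ carries $\{1,2\}$, the colors $1$ and $2$ genuinely come from two different vertices — but that is forced, since a single vertex with $|f(u)|=1$ cannot supply both colors.
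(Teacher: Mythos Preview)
Your proof is correct and follows essentially the same construction as the paper: both convert an $i_{r2}(G)$-function into an IDRDF by sending $\emptyset\mapsto 0$, singletons $\mapsto 2$, and $\{1,2\}\mapsto 3$, then bound the resulting weight by $2i_{r2}(G)$. The only minor difference is in the sharpness example: the paper uses complete $r$-partite graphs $K_{m_1,\dots,m_r}$ with $m_1\ge 2$, whereas you use $\overline{K_n}$; both are valid.
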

\begin{proof}
Let $f=(V_\emptyset^f,V_{\{1\}}^f,V_{\{2\}}^f,V_{\{1,2\}}^f)$ be an $i_{r2}(G)$-function. Define the function $g$ on $G$ by $g(x)=3$ if $x\in V_{\{1,2\}}^f$, $g(x)=2$ if $x\in V_{\{1\}}^f\cup V_{\{2\}}^f$, and $g(x)=0$ if $x\in V_\emptyset^f$. Clearly $g$ is an IDRDF of $G$ with the weight $i_{r2}(G)$, and therefore
$$i_{dR}(G)\leq w(g)=3|V_{\{1,2\}}^f|+2|V_{\{1\}}^f|+2|V_{\{2\}}^f|$$
$$=2(2|V_{\{1,2\}}^f|+|V_{\{1\}}^f|+|V_{\{2\}}^f|)-|V_{\{1,2\}}^f|$$
$$=2i_{r2}(G)-|V_{\{1,2\}}^f|\leq 2i_{r2}(G).$$
The upper bound holds with the equality for the complete $r(\geq 2)$-partite graphs $G=K_{m_1,\cdots,m_r}$ with $m_1\leq\cdots\leq m_r$, where $(m_1\ge 2)$.
\end{proof}

\begin{theorem}\label{theo7}
Let $G$ be a connected graph. Then,
$$i_{dR}(G)\geq i_{\{R2\}}(G)+i(G)$$
and this bound is sharp.
\end{theorem}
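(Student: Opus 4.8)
The plan is to start from an $i_{dR}(G)$-function and produce from it \emph{both} an independent Roman $\{2\}$-dominating function and an independent dominating set, then add the two weights. By Observation \ref{obser1}, any $i_{dR}(G)$-function has the form $f=(V_0,V_2,V_3)$ with $V_2\cup V_3$ independent, and $i_{dR}(G)=2|V_2|+3|V_3|$. First I would set $g=(V_0',V_1',V_2')$ with $V_1'=V_2$, $V_2'=V_3$, $V_0'=V_0$; as already used in the proof of Proposition \ref{prop+}, this $g$ is an IR2DF, so $i_{\{R2\}}(G)\le |V_2|+2|V_3|$. Next I would exhibit an independent dominating set $D$ with $|D|\le |V_3|+|V_2|$; the natural candidate is $D=V_3\cup V_2$ itself when it is already a dominating set, but in general it need not dominate $V_0$ (a vertex of $V_0$ could be dominated only by lying in the closed neighborhood requirement via two $2$'s or one $3$, so actually every vertex of $V_0$ \emph{does} have a neighbor in $V_2\cup V_3$). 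Hence $V_2\cup V_3$ is in fact a dominating set, and it is independent, so $i(G)\le |V_2|+|V_3|$.

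Now I would simply add: $i_{\{R2\}}(G)+i(G)\le (|V_2|+2|V_3|)+(|V_2|+|V_3|)=2|V_2|+3|V_3|=i_{dR}(G)$, which is exactly the claimed inequality $i_{dR}(G)\ge i_{\{R2\}}(G)+i(G)$. The one subtlety I want to be careful about is that $i(G)$ is the \emph{independent domination number}, i.e. the minimum size of a maximal independent set, and $V_2\cup V_3$ need not be maximal; but any independent set extends to a maximal one, and $i(G)$ is bounded above by the size of \emph{some} maximal independent set, so it suffices to note $i(G)\le \gamma(G)\le |V_2\cup V_3|$ — or more directly, extend $V_2\cup V_3$ to a maximal independent set $S$; then $S$ dominates $G$ and $|S|\ge |V_2\cup V_3|$, which is the wrong direction. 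So the clean route is: $V_2\cup V_3$ is an independent dominating set is false in the maximality sense, hence I should instead argue $i(G)\le\gamma(G)$ and $\gamma(G)\le|V_2\cup V_3|$ since $V_2\cup V_3$ dominates — wait, $i(G)\le\gamma(G)$ is false in general (it is $\gamma(G)\le i(G)$). The correct and safe statement is simply that $V_2\cup V_3$ is an independent set that dominates $G$, and every such set has size at least $i(G)$ only if it is maximal; so the genuinely needed fact is that a minimum independent dominating set is no larger than a minimum dominating set that happens to be independent — which is true because an independent dominating set \emph{is} a maximal independent set (an independent set that dominates cannot be properly extended). Thus $V_2\cup V_3$, being independent and dominating, is maximal independent, giving $i(G)\le|V_2|+|V_3|$ cleanly.

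The main obstacle, then, is not the arithmetic but pinning down the domination claim: one must verify carefully that in an $i_{dR}$-function the set $V_2\cup V_3$ really dominates \emph{all} of $V_0$ (immediate from the IDRDF condition: $f(v)=0$ forces a neighbor in $V_3$ or two neighbors in $V_2$) and that independence plus domination forces maximality. For sharpness I would check a family where equality holds; the stars $K_{1,n}$ are a natural first test ($i_{dR}=3$, $i_{\{R2\}}=2$, $i=1$, and $2+1=3$), and more generally any graph with $\Delta(G)=n-1$ works by Observation \ref{OBSE}(iii), giving $i_{dR}(G)=3=2+1=i_{\{R2\}}(G)+i(G)$ since such graphs also satisfy $i(G)=1$ and $i_{\{R2\}}(G)=2$.
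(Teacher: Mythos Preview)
Your argument is correct and is essentially the paper's own proof: from an $i_{dR}(G)$-function $f=(V_0,V_2,V_3)$ you produce the IR2DF $g$ with $V_1'=V_2$, $V_2'=V_3$ to get $i_{\{R2\}}(G)\le |V_2|+2|V_3|$, observe that $V_2\cup V_3$ is an independent dominating set to get $i(G)\le |V_2|+|V_3|$, and add. Your sharpness example (graphs with $\Delta=n-1$) also matches one of the paper's; the paper additionally cites complete $r$-partite graphs $K_{m_1,\ldots,m_r}$ with $m_1\ge 2$. The long digression about maximality (with the false starts involving $i(G)\le\gamma(G)$ and extending to a larger set) should simply be cut: the clean one-line justification you eventually reach---an independent set that dominates is automatically maximal independent, hence an independent dominating set---is all that is needed.
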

\begin{proof}

The bound clearly holds for $K_1$ and $K_2$. So, we assume that $G$ is of order $n\geq 3$. In view of Observation \ref{obser1} we let
$f=(V_0^f,V_2^f,V_3^f)$ be an $i_{dR}(G)$-function. We observe that $V_2^f\cup V_3^f$ is an independent dominating set in $G$ and therefore,
\begin{equation}\label{EQ1}
i(G)\leq |V_2^f|+|V_3^f|.
\end{equation}
We define $g:V(G)\rightarrow \{0,1,2\}$ by
 $$g(v)=
\begin{cases}
 f(v)-1 & if\ v\in V_2^f\cup V_3^f \\
 f(v) & if\ v\in V_{0}^f.
\end{cases}$$
 Therefore, $g$ is an IR2DF. Moreover,
\begin{equation}\label{EQ2}
i_{\{R2\}}(G)\leq \sum_{v\in V(G)}g(v)=|v_2^f|+2|V_3^f|.
\end{equation}
Together inequalities (\ref{EQ1}) and (\ref{EQ2}) imply that $i(G)+i_{dR}(G)\leq2|V_2^f|+3|V_3^f|=i_{dR}(G)$.

\ \ To see the bound is sharp it suffices to consider the complete graph $K_n$ or any graph $G$ with $\Delta(G)=|V(G)|-1$ or the $r$-partite graph $G=K_{m_1,\cdots,m_r}$ with $m_1\leq\cdots\leq m_r$, where $m_1\ge 2$.
\end{proof}

\ \ We recall that a matching $M$ of graph $G$ is a subset of the set of edges $E(G)$, such that no vertex in $V(G)$ is incident to more than one edge in $M$ in the other words one can say that no two edges in $M$ have a common vertex. A matching $M$ is said to be maximum if, $|M|\geq|M'|$, for any other matching $M'$ of $G$. We also recall that an edge cover $Q$ of a graph $G$ is a set of edges such that each vertex in $G$ is incident to at least one edge in $Q$. By $\alpha'(G)$ and $\beta'(G)$ we mean the maximum cardinality of a matching and the minimum cardinality of an edge cover in $G$, respectively.

\ \ We make use of the following classic result due to Gallai.

\begin{lemma}\emph{(\cite{West})}\label{A}
 If $G$ is a graph of order $n$ and with no isolated vertices, then $\alpha'(G)+\beta'(G)=n$.
\end{lemma}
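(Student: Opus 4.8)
The final statement is Gallai's classical identity, which is cited as a known result (Lemma \ref{A}) rather than proved. Since the paper attributes it to West's textbook, I would present the standard textbook argument rather than inventing a new one.

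The plan is to prove the two inequalities $\alpha'(G)+\beta'(G)\le n$ and $\alpha'(G)+\beta'(G)\ge n$ separately, each by an explicit construction that turns one optimal object into a feasible instance of the other. First I would establish $\beta'(G)\le n-\alpha'(G)$. Take a maximum matching $M$, so $|M|=\alpha'(G)$, and let $U$ be the set of vertices left unmatched by $M$; then $|U|=n-2\alpha'(G)$. Because $G$ has no isolated vertices, every vertex of $U$ has at least one incident edge, so for each $u\in U$ I would select one such edge $e_u$. The set $Q=M\cup\{e_u : u\in U\}$ covers every vertex: the matched vertices are covered by $M$, and each unmatched vertex $u$ is covered by $e_u$. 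Hence $Q$ is an edge cover, and $\beta'(G)\le|Q|\le|M|+|U|=\alpha'(G)+(n-2\alpha'(G))=n-\alpha'(G)$, giving the first inequality.

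Next I would prove the reverse, $\alpha'(G)\ge n-\beta'(G)$, by taking a minimum edge cover $Q$ with $|Q|=\beta'(G)$ and extracting a large matching from it. The key observation is that a minimum edge cover contains no path of three edges, and in fact its components are stars (otherwise one could delete an edge and still cover all vertices, contradicting minimality). Working componentwise, if $Q$ has $k$ components spanning all $n$ vertices, each component being a star on some number of vertices contributes one edge to a matching, so a matching $M\subseteq Q$ picking one edge per component has $|M|=k$. Counting edges, a star on $t$ vertices uses $t-1$ edges, so summing over components gives $|Q|=n-k$, whence $k=n-\beta'(G)$ and $\alpha'(G)\ge|M|=k=n-\beta'(G)$. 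Combining the two inequalities yields $\alpha'(G)+\beta'(G)=n$.

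The main obstacle is the structural claim in the second direction, namely that a minimum edge cover decomposes into vertex-disjoint stars that span $V(G)$. I would justify this carefully: if some component of $Q$ contained a path on three edges or any non-star structure, it would contain an edge both of whose endpoints are incident to other edges of $Q$, and removing that edge would leave a smaller edge cover, contradicting the minimality of $Q$. Establishing that each component is therefore a star, and that these stars are vertex-disjoint and collectively span $V(G)$, is the delicate step; once it is in place the edge count $|Q|=n-k$ and the extraction of the matching are routine.
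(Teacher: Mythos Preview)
Your proposal is correct and reproduces the standard textbook argument for Gallai's identity. Note, however, that the paper does not actually prove Lemma~\ref{A}: it is stated with the citation \cite{West} and used as a black box, so there is no ``paper's own proof'' to compare against. Your decision to supply the classical two-inequality argument (building an edge cover from a maximum matching plus one edge per unsaturated vertex, and extracting a matching from the star-components of a minimum edge cover) is exactly what West presents, and the justification you give for the star decomposition of a minimum edge cover is the right one.
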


\begin{theorem}
Let $G$ be a graph with no isolated vertices. Then,
$$i_{dR}(G)\leq i_{\{R2\}}(G)+\beta'(G)$$
and this bound is sharp.
\end{theorem}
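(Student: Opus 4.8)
The plan is to start from an $i_{\{R2\}}(G)$-function $h=(V_0^h,V_1^h,V_2^h)$ of minimum weight $i_{\{R2\}}(G)=|V_1^h|+2|V_2^h|$ and upgrade it to an IDRDF by spending at most $\beta'(G)$ extra units of weight. The natural first move is to set $g(v)=h(v)+1$ for every $v\in V_1^h\cup V_2^h$ and $g(v)=0$ on $V_0^h$; this already has the right domination behaviour at zero-vertices (a vertex previously covered by a single $2$ is now covered by a single $3$, and one covered by two $1$'s is now covered by two $2$'s), but the positive set $V_1^h\cup V_2^h$ need not be independent, so $g$ is a DRDF, not yet an \emph{independent} one. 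The core of the argument is to fix this independence defect cheaply, and the bridge to $\beta'(G)$ should come through Lemma~\ref{A} (Gallai): $\beta'(G)=n-\alpha'(G)$.

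The key steps, in order, are: (1) pick a maximum matching $M$ of $G$, so $|M|=\alpha'(G)$ and the set $U$ of $M$-unsaturated vertices has size $n-2\alpha'(G)$, and note $U$ is independent by maximality of $M$; (2) observe that a minimum edge cover $Q$ can be built from $M$ by adding, for each vertex of $U$, one edge incident to it, giving $|Q|=\alpha'(G)+(n-2\alpha'(G))=n-\alpha'(G)=\beta'(G)$; (3) starting from the maximal independent set $S$ underlying the trivial IDRDF of Proposition~\ref{prop1} (or from a maximal independent set extending $U$), assign $3$ to vertices of $S$ and $0$ elsewhere — this is the $3i(G)$-type bound and is too weak on its own, so instead we want to combine the edge-cover structure with a good independent dominating set; (4) the efficient route: let $S$ be an independent dominating set and root the bound on the fact that each edge of a minimum edge cover $Q$ meets at least one vertex we can afford to "promote," charging one unit of extra weight per edge of $Q$. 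I would make this precise by taking the $i_{\{R2\}}$-function $h$, letting $W=V_1^h\cup V_2^h$ be its positive set, and for each edge of a minimum edge cover deciding which endpoint keeps a positive label so that the retained positive vertices form an independent \emph{dominating} set while every old zero-vertex stays double-Roman-dominated; the total weight increment over $i_{\{R2\}}(G)$ is then at most $|Q|=\beta'(G)$.

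The main obstacle — and the step I expect to require genuine care — is step (4): guaranteeing simultaneously that (a) the retained positive vertices are pairwise nonadjacent, (b) every vertex assigned $0$ still has either one neighbour of value $3$ or two neighbours of value $2$, and (c) the total extra cost is bounded by the number of edges in the cover and not something larger. One clean way to control (b) is to ensure the final positive set contains an independent \emph{dominating} set $S$ with every vertex of $S$ assigned $3$; then domination of zero-vertices is automatic, and the weight is $3|S|$ plus whatever is spent on the remaining structure, which we must reconcile with $i_{\{R2\}}(G)+\beta'(G)$. I would therefore split into cases according to whether a zero-vertex of $h$ was dominated by a single $2$ (handled by promoting that neighbour to $3$, costing $1$) or by two $1$'s (handled by keeping one of them at value $2$ — cost $1$ — and zeroing the other, then re-dominating that other vertex via the edge-cover edge at its disposal). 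Finally, for sharpness I would exhibit the complete graph $K_n$ (or any $G$ with $\Delta(G)=n-1$), where $i_{dR}(G)=3$, $i_{\{R2\}}(G)=2$, and $\beta'(G)=1$, so equality $3=2+1$ holds; more generally the complete $r$-partite graphs $K_{m_1,\dots,m_r}$ with $m_1\ge 2$ should also meet the bound, paralleling the sharpness examples in the preceding theorems.
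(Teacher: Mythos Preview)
Your proposal rests on a misreading of the hypothesis. An $i_{\{R2\}}(G)$-function is by definition an \emph{independent} Roman $\{2\}$-dominating function, so its positive set $V_1^h\cup V_2^h$ is already independent. Consequently the ``upgrade'' $g(v)=h(v)+1$ on positive vertices (and $g(v)=0$ on $V_0^h$) that you write down in your first paragraph is \emph{immediately} an IDRDF: a zero-vertex formerly dominated by a single $2$ now has a neighbour of value $3$, one dominated by two $1$'s now has two neighbours of value $2$, and the positive set remains independent. All of your steps (1)--(4) devoted to repairing an ``independence defect'' are therefore unnecessary, and indeed you never actually carry them through --- you only describe obstacles you anticipate.

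Once $g$ is seen to be an IDRDF, the argument is a single line, exactly as in the paper: $w(g)=i_{\{R2\}}(G)+|V_1^h|+|V_2^h|=i_{\{R2\}}(G)+(n-|V_0^h|)$, and since $V_1^h\cup V_2^h$ is independent, every edge of a maximum matching has at least one endpoint in $V_0^h$, giving $\alpha'(G)\le |V_0^h|$ and hence $n-|V_0^h|\le n-\alpha'(G)=\beta'(G)$ by Gallai.

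Your sharpness examples are also incorrect. For $K_n$ with $n\ge 3$ one has $\beta'(K_n)=\lceil n/2\rceil$, not $1$, so $i_{\{R2\}}(K_n)+\beta'(K_n)=2+\lceil n/2\rceil>3=i_{dR}(K_n)$; only $K_2$ survives from that family. General complete multipartite graphs $K_{m_1,\dots,m_r}$ with $m_1\ge 2$ do not give equality either (try $K_{2,2,2}$). The paper uses $K_{p,p}$ with $p\ge 2$, where $i_{dR}=2p$, $i_{\{R2\}}=p$, and $\beta'=p$.
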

\begin{proof}
Let $f:V(G)\rightarrow \{0,1,2\}$ be an $i_{\{R2\}}(G)$-function. We define $g:V(G)\rightarrow \{0,2,3\}$ by $$g(v)=
\begin{cases}
 f(v)+1 & \mbox{if }  v\in V_1^f\cup V_2^f \\
 0 & \mbox{if }  v\in V_{0}^f.
\end{cases}$$
This equation shows that $g$ is an IDRDF of $G$. Therefore,
\begin{equation}\label{EQ3}
i_{dR}(G)\leq \sum_{v\in V(G)}g(v)=2|V_1^f|+3|V_2^f|=i_{\{R2\}}(G)+|V_1^f|+|V_2^f|
=i_{\{R2\}}(G)+n-|V_0^f|.
\end{equation}
It is easy to see that at least one vertex incident to each edge of a maximum matching belongs to $V_0^f$. Therefore, $\alpha'(G)\leq|V_0^f|$. Now Lemma \ref{A} and  the inequality (\ref{EQ3}) imply that
$$i_{dR}(G)\leq i_{\{R2\}}(G)+n-\alpha'(G)=i_{\{R2\}}(G)+\beta'(G).$$
For sharpness consider the complete bipartite graph $K_{p,p}$ in which $p\geq 2$. Then, $i_{dR}(K_{p,p})=2p=p+p=i_{\{R2\}}(K_{p,p})+\beta'(K_{p,p})$.
\end{proof}


\section{Independent double Roman and independent domination}

We first give some lower and upper bounds on the IDRDN in terms of the
independent domination number.

\begin{proposition}\label{prop-3}
For any graph $G$, $2i(G)\leq i_{dR}(G)\leq 3i(G)$. These bounds are sharp.
\end{proposition}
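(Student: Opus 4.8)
The plan is to prove both inequalities by converting between an optimal independent dominating set and an IDRDF, exploiting Observation \ref{obser1}.

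\textbf{Upper bound $i_{dR}(G)\le 3i(G)$.} First I would take a maximum\-weight... rather, a minimum independent dominating set $S$ of $G$, so $|S|=i(G)$. Define $f$ by assigning $3$ to every vertex of $S$ and $0$ to every vertex of $V(G)\setminus S$. Since $S$ is independent, the positive vertices form an independent set; since $S$ is dominating, every vertex with $f(v)=0$ has a neighbor in $S$, hence a neighbor assigned $3$, so the double Roman condition at $0$\-vertices is met, and there are no $1$\-vertices to check. Thus $f$ is an IDRDF of weight $3|S|=3i(G)$, giving $i_{dR}(G)\le 3i(G)$. (This is essentially the construction already used in Proposition \ref{prop1}.)

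\textbf{Lower bound $2i(G)\le i_{dR}(G)$.} For this direction I would start from an $i_{dR}(G)$\-function $f=(V_0^f,V_1^f,V_2^f,V_3^f)$, and invoke Observation \ref{obser1} to conclude $V_1^f=\emptyset$, so $f=(V_0^f,V_2^f,V_3^f)$ with $i_{dR}(G)=2|V_2^f|+3|V_3^f|$. The key structural fact, already noted in the proof of Theorem \ref{theo7}, is that $V_2^f\cup V_3^f$ is an independent dominating set of $G$: it is independent because $f$ is an IDRDF, and it is dominating because any vertex in $V_0^f$ needs a neighbor in $V_2^f\cup V_3^f$ to satisfy the double Roman condition. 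Hence $i(G)\le |V_2^f|+|V_3^f|$, and therefore $2i(G)\le 2|V_2^f|+2|V_3^f|\le 2|V_2^f|+3|V_3^f|=i_{dR}(G)$.

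\textbf{Sharpness.} For sharpness of both bounds I would exhibit explicit families. For the lower bound, complete graphs $K_n$ (or any $G$ with $\Delta(G)=n-1$) have $i(G)=1$ and, by Observation \ref{OBSE}, $i_{dR}(G)=3$, but this gives $i_{dR}=3=3i(G)$, so it actually witnesses the upper bound; for the lower bound $2i(G)$ I would instead use a graph where an optimal IDRDF uses only value $2$, for instance $K_{2,2}$ (or more generally $K_{m,m}$ with $m\ge2$), where $i(K_{m,m})=m$ and $i_{dR}(K_{m,m})=2m=2i(K_{m,m})$ by Observation \ref{OBSE}. I do not anticipate a serious obstacle here: the argument is a short two\-way translation, and the only point requiring care is the explicit separation of the two sharpness examples, since the extremal graphs for the two bounds are genuinely different.

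\begin{proof}

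\end{proof}
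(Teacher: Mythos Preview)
Your proposal is correct and follows essentially the same approach as the paper: assigning $3$ to a minimum independent dominating set for the upper bound, and using Observation \ref{obser1} together with the fact that $V_2^f\cup V_3^f$ is an independent dominating set for the lower bound. Your sharpness examples also match the paper's (graphs with $\Delta=n-1$ for the upper bound; complete multipartite graphs with smallest part size at least $2$ for the lower bound), so there is nothing substantive to add.
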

\begin{proof}
For the lower bound, in view of Observation \ref{obser1} we let $f=(V_0,V_2,V_3)$ be an $i_{dR}(G)$-function. Let $S$ be a minimum independent dominating set in $G$. Note that $(\emptyset,\emptyset,S)$ is an IDRDF. This yields the upper bound $i_{dR}(G)\leq 3i(G)$.\\
Furthermore, $V_2\cup V_3$ is an independent dominating set in $G$. Thus, $i(G)\leq |V_2|+|V_3|$. Taking into account this, we obtain the lower bound as follows.
$$i_{dR}(G)=2|V_2|+3|V_3|\geq 2(|V_2|+|V_3|)\geq 2i(G).$$
For the upper bound, let $\mathcal{F}$ be the family of graphs $G$ with $\Delta(G)=|V(G)|-1$. Then $i(G)=1$ and $i_{dR}(G)=3$. For the lower bound, let $\mathcal{H}$ be a family of graphs $G$ such that $G$ has a minimum maximal independent set with at least $2$ vertices like $U$ such that every vertices of $G-U$ is adjacent to $2$ vertices of $U$. Then $i_{dR}(G)=2i(G)$. For example, let
$G=K_{m_1,\cdots,m_r}$ be a complete $r(\geq 2)$-partite graph with $m_1\leq\cdots\leq m_r$ where $m_1\ge 2$.
\end{proof}

Recall that a set $R\subseteq V(G)$ is a packing set of $G$ if $N[x]\cap N[y]=\emptyset$ holds for any two distinct vertices $x,y\in R$. The packing number $\rho(G)$ is the maximum cardinality of a packing set in $G$. Let $\delta$ denote the minimum degree of the graph $G$. A classical result shows that: for any graph $G$, $\rho(G)\leq \gamma(G) \le i(G)$.

\begin{proposition}\label{prop10}
If $G$ is a connected graph of order $n$, then
$$i_{dR}(G)+(2\delta-1)\rho(G)\leq 2n$$
and this bound is sharp.
\end{proposition}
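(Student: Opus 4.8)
The plan is to bound the weight of a specially constructed IDRDF using a maximum packing set $R$. Let $R=\{x_1,\dots,x_{\rho(G)}\}$ be a packing set of $G$ realizing $\rho(G)$. Since the closed neighborhoods $N[x_i]$ are pairwise disjoint, and each $x_i$ has degree at least $\delta$, we have $|N[x_i]|\ge \delta+1$, so these disjoint closed neighborhoods account for at least $(\delta+1)\rho(G)$ vertices. I would then produce an IDRDF by first forming a maximal independent set $S$ that contains $R$ (extend $R$ to a maximal independent set; this is possible because $R$ is itself independent), and assigning $3$ to every vertex of $S$. This is already an IDRDF of weight $3|S|$, but that is too weak; the point of the packing hypothesis is to upgrade the vertices of $R$ from weight $3$ to weight $2$ at a controlled cost.

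The key observation is that a vertex $x_i\in R$ with $f(x_i)=2$ is legitimate only if every neighbor $u$ of $x_i$ that receives $0$ has a second neighbor of weight $2$ or a neighbor of weight $3$; since $u\in N(x_i)$ and the $N[x_i]$ are disjoint packing-neighborhoods, the only vertices that could fail are those in $N(x_i)\setminus S$ whose entire "help" came only from $x_i$. So the natural construction is: assign $3$ to all of $S\setminus R$, assign $2$ to each $x_i\in R$, and then for each $x_i$ assign $2$ to exactly one further neighbor $y_i\in N(x_i)$ (chosen outside $S$, which is possible since $|N[x_i]|\ge\delta+1\ge 2$ and the packing property keeps these choices independent across distinct $i$). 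Then every $0$-vertex in $N(x_i)$ sees the two $2$'s at $x_i$ and $y_i$, every $0$-vertex outside all the $N[x_i]$'s still sees a $3$ in $S$, and the positive-weight set $(S\setminus R)\cup R\cup\{y_1,\dots,y_{\rho(G)}\}$ remains independent because each $y_i$ lies strictly inside the isolated block $N[x_i]$ and $R$ is independent — one must check $y_i\not\sim y_j$ and $y_i\not\sim S$, which follows from disjointness of the $N[x_i]$ and the choice $y_i\notin S$ (after possibly reselecting $S$ to avoid these finitely many vertices).

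Counting the weight: the vertices of $S$ outside $R$ contribute $3(|S|-\rho(G))$, the vertices of $R$ contribute $2\rho(G)$, and the extra vertices $y_i$ contribute $2\rho(G)$, for a total of $3|S|-3\rho(G)+4\rho(G)=3|S|+\rho(G)$. This is the wrong direction, so I instead would drop $S$ entirely outside the packing blocks and cover the rest more cheaply. The better bookkeeping: partition $V(G)$ into the $\rho(G)$ disjoint blocks $N[x_i]$ (total size $\ge(\delta+1)\rho(G)$) and a remainder $W$ of size $n-\sum_i|N[x_i]|\le n-(\delta+1)\rho(G)$. Inside each block, the pair $\{x_i,y_i\}$ of weight $2$ double-Roman-dominates everything in $N[x_i]$ (since every other vertex of the block is adjacent to $x_i$ — wait, only the neighbors of $x_i$ are, but $N[x_i]=\{x_i\}\cup N(x_i)$, so yes every vertex of the block other than $x_i$ is a neighbor of $x_i$), costing $4$ per block, i.e. $4\rho(G)$ total; for the remainder $W$ take a maximal independent set of $G[W]$...

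Actually the cleanest route, and the one I would write up, is: cover the blocks at cost $4$ each and cover $W$ by assigning $3$ to a subset of $W$ of size at most $|W|$, giving $i_{dR}(G)\le 4\rho(G)+3|W|\le 4\rho(G)+3(n-(\delta+1)\rho(G))=3n-(3\delta-1)\rho(G)$, which is weaker than claimed. To get the stated $i_{dR}(G)\le 2n-(2\delta-1)\rho(G)$ one needs the remainder handled at average cost close to $2$ per vertex, which suggests instead covering $W$ via an edge-cover / Gallai-type argument as in the previous theorem, or observing $W$ together with the non-$x_i$ part of each block can be dominated more efficiently. I expect the main obstacle to be exactly this: getting the remainder term down to a factor of $2$ rather than $3$, which will force using the $i_{\{R2\}}$-to-$i_{dR}$ machinery of the earlier theorems (assign $f(v)+1$ on an $i_{\{R2\}}$-function) restricted to $G-\bigcup_i N[x_i]$, combined with the cheap weight-$4$ double-Roman domination of each packing block. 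For sharpness I would exhibit a graph where a maximum packing set's blocks are cliques (e.g. a disjoint-ish union of $K_{\delta+1}$'s suitably joined, or $C_{3\rho}$-type constructions) so that both the block cost $4$ and the remainder cost $2$ per vertex are simultaneously tight; I would verify the claimed equality on that family directly.
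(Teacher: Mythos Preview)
Your proposal never reaches a proof; you correctly set up the packing-block decomposition but fail to find the construction that gives cost $2$ per vertex on the remainder. The paper's argument is much simpler than anything you attempt. With $R$ a maximum packing, $A=N(R)$, and $B=V(G)\setminus(R\cup A)$, the paper assigns $3$ to every vertex of $R$, $0$ to every vertex of $A$, and $2$ to every vertex of $B$. Each vertex of $A$ has a neighbor of weight $3$ in $R$, so the DRDF condition holds; the weight is $3\rho(G)+2|B|$, and since $|A|\ge\delta|R|$ one has $|B|\le n-(\delta+1)\rho(G)$, which gives $i_{dR}(G)\le 3\rho(G)+2(n-(\delta+1)\rho(G))=2n-(2\delta-1)\rho(G)$ immediately. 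The ``factor $2$ on the remainder'' you struggle with is obtained by the brute-force move of assigning $2$ to \emph{every} vertex of $B$, not by searching for an independent dominating subset of it.

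Your detours go wrong because you insist on putting weight $2$ on the packing vertices, which then forces you to pay for auxiliary helpers $y_i$ inside each block (cost $4$ per block); the paper instead puts weight $3$ on each $x_i$, so that all of $N(R)$ is handled at zero additional cost and only $B$ remains. For sharpness the paper simply takes $K_n$, where $\rho=1$, $\delta=n-1$, and $i_{dR}(K_n)=3=2n-(2(n-1)-1)$. (One remark: your instinct to worry about independence of the positive-weight set is not unfounded---the paper asserts the function is an IDRDF, but two vertices of $B$ can in principle be adjacent, a point the paper does not address.)
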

\begin{proof}
Let $R$ be a maximum packing set of $G$ and $A=N(R)$. Let $B=V(G)-(A\cup R)$. Each vertex in $A$ has exactly one neighbor in $R$ and each vertex in $R$ has at least $\delta$ neighbors in $A$. Therefore,
$\delta|R|\leq|[R,V-R]|=|A|$. Therefore,
$$|B|=n-|A\cup R|=n-|A|-|R|\leq n-(\delta+1)|R|$$
Now we define $f:V(G)\rightarrow \{0,1,2,3\}$ by,
$$f(v)=
\begin{cases}
  3& \mbox{if } v\in R\\
  0 & \mbox{if } v\in A \\
  2& \mbox{if } v\in B.
\end{cases}$$
It is easy to see that $f$ is an IDRDF of $G$. Therefore,
$$i_{dR}(G)\leq w(f)=3|R|+2|B|=3|R|+2n-(2\delta+2)|R|=2n-(2\delta-1)\rho(G).$$
This bound is sharp for the complete graph $K_n$, for $n\geq 2$.
\end{proof}

\begin{proposition}\label{prop6}
For any graph $G$ of order $n$ with maximum degree $\Delta$,
$$i_{dR}(G)\geq \frac{2n}{\Delta}+\frac{\Delta-2}{\Delta}i(G)$$
and this bound is sharp.
\end{proposition}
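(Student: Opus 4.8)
The plan is to use the same "charge-argument" idea that underlies Proposition \ref{prop6}: start from an $i_{dR}(G)$-function $f=(V_0,V_2,V_3)$ (we may assume $V_1=\emptyset$ by Observation \ref{obser1}) and count edges between the positive-weight vertices $V_2\cup V_3$ and the zero-weight vertices $V_0$. First I would note that $V_2\cup V_3$ is independent, so every edge incident with a vertex of $V_2\cup V_3$ goes into $V_0$; hence the number of such edges is at most $\Delta(|V_2|+|V_3|)$. On the other side, the double Roman condition forces every vertex of $V_0$ to be ``covered'': a vertex $v\in V_0$ either has a neighbor in $V_3$ or at least two neighbors in $V_2$. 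The cleanest accounting is to assign to each $v\in V_0$ a demand of $2$ units and observe that a vertex in $V_3$ supplies at most $\Delta$ units (one to each neighbor) while a vertex in $V_2$ supplies at most $\Delta$ units as well, but a $V_2$-vertex only ever contributes a \emph{half-unit} of ``coverage weight'' to a given neighbor (since two of them are needed), so the total coverage available is at most $\Delta|V_3| + \tfrac{1}{2}\cdot 2\cdot \Delta|V_2|$... — one has to be a little careful here to get the coefficients to match the claimed bound, which is why I would instead phrase the count directly in terms of weight.

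The key step, then, is the inequality
\begin{equation*}
2|V_0| \;\le\; \Delta\,|V_3| \;+\; \Delta\,|V_2|,
\end{equation*}
or rather a slightly sharper version of it. Indeed, double-count the edges of the bipartite subgraph between $V_0$ and $V_2\cup V_3$: from the $V_0$ side, each $v\in V_0$ contributes at least $1$ such edge if it has a $V_3$-neighbor and at least $2$ if it is covered only by $V_2$-vertices; in the worst case this gives $|[V_0, V_2\cup V_3]| \ge |V_0|$, but if we weight a $V_3$-neighbor as being ``worth $2$'' we obtain $\sum_{v\in V_0}(\text{weighted demand}) \ge 2|V_0|$, and this weighted demand is met by edges from $V_3$ counted with multiplicity $2$ and edges from $V_2$ counted with multiplicity $1$, giving $2\,|[V_0,V_3]| + |[V_0,V_2]| \ge 2|V_0|$. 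Combining with $|[V_0,V_3]| \le \Delta|V_3|$ and $|[V_0,V_2]| \le \Delta|V_2|$ yields $2|V_0| \le 2\Delta|V_3| + \Delta|V_2|$. Then I would substitute $|V_0| = n - |V_2| - |V_3|$ and rearrange, using $|V_2|+|V_3| \ge i(G)$ (because $V_2\cup V_3$ is an independent dominating set, as already used in Proposition \ref{prop-3}) to replace one copy of $|V_2|+|V_3|$ and obtain the coefficient $\tfrac{\Delta-2}{\Delta}i(G)$. A short computation then gives $i_{dR}(G) = 2|V_2|+3|V_3| \ge \tfrac{2n}{\Delta} + \tfrac{\Delta-2}{\Delta}i(G)$.

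The main obstacle I anticipate is bookkeeping the constants so that the ``$V_2$ needs two neighbors'' phenomenon produces exactly the stated coefficient rather than something weaker; in particular one must decide whether to bound $|V_0|$ or a weighted version of it, and handle the mixed case (a $V_0$-vertex adjacent both to $V_3$ and to $V_2$ vertices) without double-charging. A secondary point is that the argument tacitly wants $\Delta \ge 2$ (so that $\frac{\Delta-2}{\Delta}$ is a sensible coefficient and division is legal); the cases $\Delta = 0$ and $\Delta = 1$ — i.e. $G$ having an isolated vertex or being $K_2$ — should be dispatched separately or excluded, and I would verify the connected hypothesis is only needed to make $i(G)$ and the degree bounds interact cleanly. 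For sharpness I would exhibit a graph meeting the bound with equality: the complete bipartite graph $K_{\Delta,\Delta}$ (or more generally a suitable regular graph), where $i(G)=\Delta$, $n=2\Delta$, and $i_{dR}(G)=2\Delta$, so that $\tfrac{2n}{\Delta}+\tfrac{\Delta-2}{\Delta}i(G) = 4 + (\Delta-2) = \Delta+2$ — which does not match $2\Delta$ in general, so I would need to search instead among graphs with small $i(G)$, e.g. those with $\Delta = n-1$ where $i_{dR}(G)=3$, $i(G)=1$, and the right-hand side is $\tfrac{2n}{n-1} + \tfrac{n-3}{n-1} = \tfrac{n+... }{n-1}$; checking that this equals $3$ exactly pins down the extremal family, and that is the final computation I would carry out.
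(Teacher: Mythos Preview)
Your approach is essentially the paper's: the paper also takes an $i_{dR}(G)$-function $f=(V_0,V_2,V_3)$, partitions $V_0$ into $S=V_0\cap N(V_3)$ and $T=V_0\setminus S$ to obtain $|S|\le\Delta|V_3|$ and $2|T|\le\Delta|V_2|$, which is exactly your inequality $2|V_0|\le 2\Delta|V_3|+\Delta|V_2|$, and then combines this with $|V_2|+|V_3|\ge i(G)$ via the same algebra. For sharpness, your second guess actually works (finish the arithmetic: $\tfrac{2n}{n-1}+\tfrac{n-3}{n-1}=\tfrac{3n-3}{n-1}=3=i_{dR}(G)$), though the paper instead cites the $\Delta=2$ examples $C_n$ for $n\equiv 0,2,3,4\pmod 6$ and $P_n$ for $n\equiv 0\pmod 3$, where the $i(G)$ term vanishes and the bound reads simply $i_{dR}=n$.
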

\begin{proof}
Let $f=(V_0^f, V_2^f,V_3^f)$ be an $i_{dR}(G)$-function of $G$. Using Observation \ref{obser1} we may assume that $V_1^f=\emptyset$. Let $S=V_0^f\bigcap N(V_3)$ and $T=V_0^f\bigcap N(V_2)$. Since each vertex in $V_3^f$ dominates at most $\Delta$ vertices of $S$, we have $|S|\leq \Delta|V_3^f|$. Since each vertex in $V_2^f$ dominates at most $\Delta$ vertices of $T$ and since each vertex in $T$ has at least two neighbors in $T$, we have
$2|T|\leq |E(V_2,T)|\leq\Delta|V_2^f|$ yielding $|T|\leq \frac{\Delta}{2}|V_2^f|$. Hence, $|V_0^f|=|S|+|T|\leq \Delta|V_3^f|+\frac{\Delta}{2}|V_2^f|$. Now we have
$$\Delta i_{dR}(G)=\Delta(2|V_2^f|+3|V_3^f|)$$
$$=\Delta(|V_2^f|+|V_3^f|)+2\Delta|V_3^f|+\Delta|V_2^f|$$
$$\geq\Delta(|V_2^f|+|V_3^f|)+2|V_0^f|$$
$$=(\Delta-2)(|V_2^f|+|V_3^f|)+2n.$$
Since $V_2^f\cup V_3^f$ is an independent dominating set of $G$, it follows that $\Delta i_{dR}(G)\geq (\Delta-2)i(G)+2n$.

\ \ The bound is sharp for the  cycles $C_n$ where $n\equiv 0,2,3,4\ (mod\ 6)$ and for the paths $P_n$ where $n\equiv 0\ (mod\ 3)$.
\end{proof}


\section{Trees}

We make use the following result to show that the IDRDNs of trees are bounded from below and above just in terms of the independent domination number. The result  may be important in its own right.

\begin{theorem}\label{the12}
For any tree $T$ of order $n\geq 2$, $i_{\{R2\}}(T)\geq i(T)+1$.
\end{theorem}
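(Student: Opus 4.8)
\textbf{Proof proposal for Theorem \ref{the12}.}

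The plan is to induct on the order $n$ of the tree $T$, showing that every tree admits an IR2DF of weight $i(T)+1$ only in extremal cases, and more generally that any minimum IR2DF must already "pay" at least $i(T)+1$. First I would dispose of the small base cases: for $n=2$ we have $i(T)=1$ and $i_{\{R2\}}(T)=2$, so the bound holds with equality; for $n=3$ (a path $P_3$) we have $i(P_3)=1$ and $i_{\{R2\}}(P_3)=2$, again equality. For the inductive step, root $T$ at a leaf and pick a deepest vertex; let $v$ be the support vertex (parent) of a deepest leaf, and let $p$ be the parent of $v$. The structure at $v$ is constrained: all children of $v$ are leaves.

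The key step is a case analysis on the value an $i_{\{R2\}}(T)$-function $f$ assigns to $v$ and its leaf-children, combined with deleting a carefully chosen subtree $T'$ at $v$ (either $v$ together with all its leaf-children, or that set plus the edge to $p$) and comparing $i(T')$ with $i(T)$ and $i_{\{R2\}}(T')$ with $i_{\{R2\}}(T)$. The mechanism I want is: removing the pendant star at $v$ drops $i_{\{R2\}}$ by at least the local cost (typically $2$, since a support vertex forces weight at least $2$ in its closed neighborhood under any R2DF, using the fact that $V_1$-vertices need a neighbor of positive weight), while it drops $i(T)$ by exactly $1$ (the support vertex $v$ can always be taken into a minimum independent dominating set, and removing its star removes exactly one such vertex from an optimal choice). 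Running the induction hypothesis $i_{\{R2\}}(T')\geq i(T')+1$ on the smaller tree then yields $i_{\{R2\}}(T)\geq i_{\{R2\}}(T')+2 \geq i(T')+1+2 = (i(T)-1)+3 > i(T)+1$, with the borderline cases (where only $+1$ is saved on $i_{\{R2\}}$, or where $i(T')=i(T)$) handled separately by exhibiting that equality $i_{\{R2\}}(T')=i(T')+1$ cannot propagate, or by a direct local argument.

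The main obstacle I anticipate is controlling the interaction between $i(T)$ and $i(T')$: unlike domination-type parameters that behave monotonically, the independent domination number can fail to drop when a pendant star is removed (e.g. if the support vertex $v$ lies in no minimum independent dominating set because a leaf-child is forced in instead), and it can even be that $i(T') = i(T)$ or that $i(T')=i(T)-k$ for $k\geq 2$ when $p$ becomes a new leaf or isolated-ish vertex. Handling this cleanly will likely require strengthening the induction hypothesis — for instance, simultaneously tracking whether $T$ has a minimum IR2DF that assigns $0$ to some prescribed leaf, or partitioning into subcases according to whether $\deg(v)\geq 3$, $\deg(v)=2$ with $p$ a support vertex, or $\deg(v)=2$ with $p$ not a support vertex. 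A secondary technical point is verifying that the modified function obtained by restricting $f$ to $T'$ (and possibly patching the value at $p$) is genuinely an \emph{independent} R2DF, i.e.\ that independence of the positive-weight set is preserved; this is routine but must be checked in each branch. Once the bookkeeping for $i(T')$ versus $i(T)$ is pinned down, the weight inequality itself follows by the arithmetic above.
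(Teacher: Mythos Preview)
Your plan is quite different from the paper's argument, and as written it contains a real gap rather than merely missing routine detail. The paper does not use induction at all: it fixes an $i_{\{R2\}}(T)$-function $f=(V_0^f,V_1^f,V_2^f)$ and directly manufactures an independent dominating set of size at most $w(f)-1$. If $V_2^f\neq\emptyset$ this is immediate, since $V_1^f\cup V_2^f$ is an independent dominating set and at least one of its members contributes $2$ to the weight. If $V_2^f=\emptyset$ (so $f$ is $\{0,1\}$-valued), every leaf gets weight $1$ and every support vertex $u$ gets weight $0$; the condition $f(N(u))\geq2$ then forces at least two weight-$1$ neighbors of $u$, and swapping those neighbors out for $u$ (using acyclicity to verify that independence and domination survive) yields an independent dominating set of size at most $|V_1^f|-1=w(f)-1$. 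No recursion, no subtree bookkeeping.

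In your scheme, by contrast, the whole argument rests on comparing the drops in $i_{\{R2\}}$ and in $i$ when a pendant star is removed, and this is precisely where things break. Your claim that ``the support vertex $v$ can always be taken into a minimum independent dominating set, and removing its star removes exactly one such vertex'' is false: if the parent $p$ of $v$ is itself a support vertex with two or more leaves, every minimum independent dominating set contains $p$ and therefore excludes $v$; and even setting that aside, $i(T')=i(T)$ genuinely occurs (e.g.\ $P_6\to P_4$, both with $i=2$), while in the same example $i_{\{R2\}}$ drops by only $1$. What your induction actually needs is the inequality $i_{\{R2\}}(T)-i_{\{R2\}}(T')\geq i(T)-i(T')$ in every case, and nothing in your outline establishes this; the ``strengthened induction hypothesis'' you invoke is carrying the entire proof and is never specified. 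The paper's direct extraction sidesteps all of this and is both shorter and cleaner than any likely completion of the inductive route.
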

\begin{proof}
If $T$ is a star, then $i_{\{R2\}}(T)=2=i(T)+1$. Let $T$ be a double star $T_{r,s}$ in which $1\leq r\leq s$. If $r=1$, then $i_{\{R2\}}(T)=3=2+1=i(T)+1$. If $r\geq2$, then $i_{\{R2\}}(T)=2+r=i(T)+1$. So, we assume from now on that $diam(T)\geq4$. Let $P$ be a diametral $r,s$-path of $T$. We root the tree $T$ at $r$. Let $f$ be an $i_{\{R2\}}(T)$-function of $T$. We now deal with two cases depending on $f$.

{\em Case 1}. Suppose that there exists a vertex $x$ for which $f(x)=2$. It is easy to observe that $S=\{v\in V(T)|f(v)\neq0\}$ is an independent dominating set in $T$. Therefore, $i(T)\leq |S|\leq w(f)-1=i_{\{R2\}}(T)-1$.\vspace{1mm}

{\em Case 2}. Suppose that $f(x)=0$ or $1$ for all $x\in V(T)$. Since $f$ is an IR2DF, it follows that $f$ assigns $1$ to all leaves and $0$ to all support vertices. Let $u$ be a support vertex and $L_u$ be the set of all leaves adjacent to $u$. If  $x\in N(u)-L_u$ has the weight $f(x)=1$, then every vertex $v\in N(x)-\{u\}$ has a neighbor $w\ne u$ with $f(w)=1$ and $f(v)=0$ by the properties of the IR2DF $f$. Let $A= \{x\in N(u)-L_u:  f(x)=1\}$. Since $f(u)=0$, $|L_u \cup A|\ge 2$.
If now we define $f':V(G)\rightarrow \{0,1\}$ by
$$f'(z)=
\begin{cases}
  0& \mbox{if } z\in (L_u \cup A \cup V_0^f\setminus \{u\})\\
  1& otherwise,
\end{cases}$$
it follows that $S'=\{v\in V(T): f'(v)\neq0\}$ is an independent dominating set of $T$. Thus, $i(T)\leq |S'|\leq w(f')\leq w(f)-1=i_{\{R2\}}(T)-1$.
\end{proof}

In what follows, for the sake of completeness, we characterize the familly of all trees for which the lower bound in Theorem \ref{the12} holds with equality. To this aim, we begin with the following lemma. 

\begin{lemma}\label{lemma111}
Let $T$ be a tree and $f=(V_0, V_1,V_2)$ be a $\gamma_{\{R2\}}(T)$-function. If $w(f)=\gamma_{\{R2\}}(T)=\gamma(T)+1$, then $V_1\cup V_2$ is an independent set.
\end{lemma}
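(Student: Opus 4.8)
The plan is to argue by contradiction. Suppose $f=(V_0,V_1,V_2)$ is a $\gamma_{\{R2\}}(T)$-function with $w(f)=\gamma(T)+1$, but $V_1\cup V_2$ is not independent in $T$; that is, there is an edge $uv$ of $T$ with $f(u),f(v)\geq 1$. The first step is to extract from $f$ a dominating set of $T$ of size at most $w(f)-1=\gamma(T)$; since any dominating set has size at least $\gamma(T)$, this would force equality everywhere and, with a bit more care, produce the required contradiction with the structure of $f$. The natural candidate dominating set is $D=V_1\cup V_2$: indeed every vertex of $V_0$ has, by the R2DF property, a neighbour in $V_2$ or two neighbours in $V_1$, so $V_0$ is dominated by $D$, and $D$ dominates itself trivially. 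Hence $\gamma(T)\leq |D|=|V_1|+|V_2|\leq |V_1|+2|V_2|=w(f)=\gamma(T)+1$, so $|D|\in\{\gamma(T),\gamma(T)+1\}$.

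Next I would analyze the two subcases. If $|D|=|V_1|+|V_2|=\gamma(T)$, then comparing with $w(f)=|V_1|+2|V_2|=\gamma(T)+1$ gives $|V_2|=1$, say $V_2=\{z\}$, and $|V_1|=\gamma(T)-1$. In this situation the edge $uv$ with both endpoints of positive weight must (since $|V_2|=1$) either have both endpoints in $V_1$, or have one endpoint equal to $z$. I would then show one can delete a vertex from $D$ and still dominate $T$: e.g.\ if $u,v\in V_1$ are adjacent, consider whether $D\setminus\{u\}$ still dominates $T$ — the only vertices that could lose domination are $u$ and private neighbours of $u$ in $V_0$; but $u$ is dominated by $v$, and a private neighbour $x\in V_0$ of $u$ would, by the R2DF condition applied to $x$, need another neighbour in $V_1\cup V_2$, contradicting privacy unless $x$ had $f(x)=0$ with its second $V_1$-neighbour being $u$ itself. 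Pushing this argument (and a symmetric one when $z$ is an endpoint of the bad edge) yields a dominating set of size $\gamma(T)-1$, a contradiction. If instead $|D|=\gamma(T)+1$, then $|V_2|=0$, so $f$ takes only values $0,1$ and $w(f)=|V_1|$; here $D=V_1$ is a dominating set of size $\gamma(T)+1$, and the existence of the edge $uv$ inside $V_1$ again lets me drop one of its endpoints while keeping domination (the private-neighbour analysis is cleaner since there is no vertex of weight $2$ to worry about), contradicting minimality of $\gamma(T)$.

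The step I expect to be the main obstacle is the private-neighbour bookkeeping in the subcase $|V_2|=1$: when I remove an endpoint $u$ of the bad edge from $D$, I must rule out that some vertex $x\in V_0$ was dominated \emph{only} by $u$, and the R2DF definition does not immediately forbid this (a $0$-vertex adjacent to $u\in V_1$ and to no vertex of $V_2$ could have its second witness be another $V_1$-vertex, or could fail to be privately dominated only if that second witness is present). I anticipate needing to use that $T$ is a tree — specifically acyclicity — to control how the neighbours of $u$ and $v$ interact, perhaps by rooting $T$ appropriately or by a small local exchange argument that re-weights $u$, $v$, and $z$ to produce a strictly lighter R2DF, contradicting that $f$ is a $\gamma_{\{R2\}}(T)$-function rather than going through domination numbers at all. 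I would keep both routes (the "build a smaller dominating set" route and the "build a lighter R2DF" route) available and use whichever closes each subcase most cleanly.
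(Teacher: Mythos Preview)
Your overall strategy matches the paper's: both argue by contradiction, both first observe that $D=V_1\cup V_2$ is a dominating set so that $\gamma(T)\le |V_1|+|V_2|\le |V_1|+2|V_2|=\gamma(T)+1$ and hence $|V_2|\le 1$, and both then try to delete vertices near the offending edge $uv$ to manufacture a dominating set that is too small. Your handling of the subcase $|V_2|=1$ is essentially the paper's argument, and your instinct that acyclicity is what makes the private-neighbour bookkeeping work is exactly right: in a tree no $V_0$-vertex can be adjacent to both endpoints of the bad edge, so the ``second witness'' guaranteed by the R2DF condition survives the deletion.

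There is, however, a genuine gap in your treatment of the subcase $|V_2|=0$. There $|D|=|V_1|=\gamma(T)+1$, so deleting \emph{one} endpoint of $uv$ yields a dominating set of size $\gamma(T)$, which is no contradiction at all; your sentence ``contradicting minimality of $\gamma(T)$'' does not go through. You must drop \emph{two} vertices and still dominate. The paper does this in two steps: if $u$ (or $v$) has a further neighbour $z\in V_1$ besides the other endpoint, then $V_1\setminus\{z,v\}$ (together with $V_2$) still dominates, using acyclicity to ensure the second witnesses of the $V_0$-neighbours of $z$ and of $v$ are distinct from the deleted vertices; otherwise all neighbours of $u$ and $v$ other than each other lie in $V_0$, and one performs a local swap, replacing $u,v$ and a pair of their $V_1$-witnesses $w_1,w_2$ by two intermediate $V_0$-vertices $z_1,z_2$, again reaching size at most $\gamma(T)-1$. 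So the subcase you flagged as the easier one is in fact the one that needs the extra idea; your proposed ``lighter R2DF'' fallback does not obviously help here either, since with $V_2=\emptyset$ there is no weight-$2$ vertex to trade down.
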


\begin{proof}
Since $f=(V_0, V_1,V_2)$ be a $\gamma_{\{R2\}}(T)$-function, it follows that $|V_1|+2|V_2|=\gamma_{\{R2\}}(T)=\gamma(T)+1\le |V_1|+|V_2|+1$. Therefore, $|V_2|\le 1$. Let two vertices $v$ and $u$ in $V_1\cup V_2$ be adjacent. Since $T$ is a tree, no vertex in $V_0$ is adjacent to both $u$ and $v$.
Let $u \in V_1$ and $v \in V_2$. Then each vertex $w$ in $V_0\cap N(u)$ has another neighbor in $V_1$. Therefore, $V_1\setminus \{u\}\cup \{v\}$ is a dominating set in $T$ of cardinality $|V_1|< |V_1|+2|V_2|-1=\gamma(T)$, a contradiction. Let $\{u ,v\} \subseteq V_1$. If $z\in N(u)\cap V_1$ and $z\ne v$, then $V_1\setminus \{z,v\} \cup V_2$ is a dominating set in $T$ of cardinality $|V_1|< |V_1|+2|V_2|-1=\gamma(T)$, a contradiction. Similarly, for $z\in N(v)\cap V_1$ and $z\ne u$ we achieve the same contradiction. If every vertex $z \in N(u)\cup N(v)\{u,v\}$ is in $V_0$, then there are $z_1\in N(u)\cap V_0$ and $z_2\in N(v)\cap V_0$ such that $z_1$ has  a neighbor  $w_1$ other than $u$ and $z_2$ has  a neighbor $w_2$ other than $v$ with positive weights. Now if $f(w_1)=f(w_2)=1$, then the set $V_1\setminus \{u,v,w_1,w_2\} \cup \{z_1,z_2\}\cup V_2$ is a dominating set of cardinality $\gamma_{\{R2\}}(T)-2$, a contradiction. If $f(w_1)=1$ and $f(w_2)=2$, then the set $V_1\setminus \{u,v,w_1\} \cup \{z_1,z_2\}\cup V_2$ is a dominating set of size $\gamma_{\{R2\}}(T)-2$, which is again a contradiction. Thus $V_1\cup V_2$ is an independent set.
\end{proof}

From Lemma \ref{lemma111}, we have the following.

\begin{corollary} \label{cor5}
Let $T$ be a tree and $f=(V_0, V_1,V_2)$ be a $\gamma_{\{R2\}}(T)$-function. If $\gamma_{\{R2\}}(T)=\gamma(T)+1$, then $i_{\{R2\}}(T)=\gamma_{\{R2\}}(T)$ and $i(T)=\gamma(T)$.
\end{corollary}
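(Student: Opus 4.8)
The plan is to obtain both equalities as quick consequences of Lemma~\ref{lemma111} and Theorem~\ref{the12}, together with the two inequalities $i_{\{R2\}}(T)\ge\gamma_{\{R2\}}(T)$ and $\gamma(T)\le i(T)$, which hold for every graph (the latter was already noted in the excerpt).

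First I would establish $i_{\{R2\}}(T)=\gamma_{\{R2\}}(T)$. Let $f=(V_0,V_1,V_2)$ be the given $\gamma_{\{R2\}}(T)$-function; since $\gamma_{\{R2\}}(T)=\gamma(T)+1$ by hypothesis, Lemma~\ref{lemma111} applies and yields that $V_1\cup V_2$ is an independent set. As $V_1\cup V_2$ is precisely the set of vertices receiving positive weight under $f$, this means that $f$ is itself an IR2DF of $T$, so $i_{\{R2\}}(T)\le w(f)=\gamma_{\{R2\}}(T)$. The reverse inequality is immediate because every IR2DF is in particular an R2DF, and hence $i_{\{R2\}}(T)=\gamma_{\{R2\}}(T)$.

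For the second equality, I would feed the value just obtained into Theorem~\ref{the12}, which gives $i_{\{R2\}}(T)\ge i(T)+1$. Combining with $i_{\{R2\}}(T)=\gamma_{\{R2\}}(T)=\gamma(T)+1$, we get $\gamma(T)+1\ge i(T)+1$, that is, $i(T)\le\gamma(T)$. Since $\gamma(T)\le i(T)$ always holds, this forces $i(T)=\gamma(T)$. One should note that the hypothesis $\gamma_{\{R2\}}(T)=\gamma(T)+1$ automatically excludes $T=K_1$ (for which $\gamma_{\{R2\}}=\gamma=1$), so $T$ has order $n\ge 2$ and Theorem~\ref{the12} is indeed applicable.

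I do not expect a genuine obstacle here; the corollary is essentially bookkeeping on top of the preceding lemma and theorem. The one point worth getting right is that the equality $i(T)=\gamma(T)$ comes out cleanly only when it is extracted from Theorem~\ref{the12}: a direct attempt to exhibit an independent dominating set of size $\gamma(T)$ from $f$ stumbles in the subcase $V_2=\emptyset$, where $V_1$ is an independent dominating set of cardinality $\gamma(T)+1$ and it is not transparent which vertex can be deleted while keeping the set dominating.
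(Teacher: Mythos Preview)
Your argument is correct. For the first equality $i_{\{R2\}}(T)=\gamma_{\{R2\}}(T)$ you and the paper proceed identically, via Lemma~\ref{lemma111}. For the second equality $i(T)=\gamma(T)$ the approaches diverge: the paper does not invoke Theorem~\ref{the12} but instead splits into cases on whether $V_2$ is empty. When $V_2\neq\emptyset$ the independent set $V_1\cup V_2$ already has size at most $|V_1|+2|V_2|-1=\gamma(T)$ and is dominating, so $i(T)\le\gamma(T)$. When $V_2=\emptyset$ the paper locates a vertex $v\in V_0$ with exactly two $V_1$-neighbours $u,w$ and replaces $\{u,w\}$ by $\{v\}$ to produce an independent dominating set of size $|V_1|-1=\gamma(T)$. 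Your route via Theorem~\ref{the12} is shorter and avoids exactly the subcase you flagged as delicate; the paper's route is more self-contained in that it does not appeal to the earlier theorem, but it pays for this with the explicit construction.
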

\begin{proof}
In Lemma \ref{lemma111}, it has been shown that $V_1\cup V_2$ is independent. If $V_2\ne \emptyset$, then  $V_1\cup V_2$ is an independent dominating set and so
$i_{\{R2\}}(T)=\gamma_{\{R2\}}(T)$. If $V_2= \emptyset$, then there exists a vertex $v\in V_0$ for which $v$ has exactly two neighbors in $V_1$ like $u,w$.
We define the function $g: V \rightarrow\{0,1\}$ by
$$g(z)=
\begin{cases}
  0& \mbox{if } z\in (V_0 \cup \{v\} \setminus \{u,w\})\\
  1& otherwise.
\end{cases}$$
It follows that $S=\{v\in V(T): g(v)=1\}$ is a minimum dominating set of $T$ of cardinality $\gamma_{\{R2\}}(T)-1$. Since $S$ is an independent set, it also follows that $|S|=i(T)$.
\end{proof}

In \cite{hk}, Henning and Klostermeyer characterized all trees $T$ of order $n\geq2$ for which $\gamma_{\{R2\}}(T)=\gamma(T)+1$. To this aim, they introduced two families of trees.\\
For positive integers $r$ and $s$, let $F_{r,s}$ be the tree obtained from a double star $S_{r,s}$ by subdividing every edge exactly once.
For example, $P_7 = F_{1,1}$. The tree $F_{4,4}$ is shown in Figure 1. Let $\mathfrak{F}$ be the family of all such trees $F_{r,s}$, that is, $\mathfrak{F} = \{F_{r,s} : r, s \ge 1\}$.\\

\begin{figure}[htb]
\tikzstyle{every node}=[circle, draw, fill=black!0, inner sep=0pt,minimum width=.16cm]
\begin{center}
\begin{tikzpicture}[thick,scale=.6]
  \draw(0,0) { 
    +(-5.00,0.00) -- +(-4.00,0.00)
    +(-5.00,1.00) -- +(-4.00,1.00)
    +(-5.00,2.00) -- +(-4.00,2.00)
    +(-5.00,3.00) -- +(-4.00,3.00)
    +(-3.00,1.500) -- +(-4.00,0.00)
    +(-3.00,1.500) -- +(-4.00,1.00)
    +(-3.00,1.500) -- +(-4.00,2.00)
    +(-3.00,1.500) -- +(-4.00,3.00)

    +(-3.00,1.500) -- +(-2.00,1.500)
    +(-2.00,1.500) -- +(-1.00,1.500)

    +(1.00,0.00) -- +(0.00,0.00)
    +(1.00,1.00) -- +(0.00,1.00)
    +(1.00,2.00) -- +(0.00,2.00)
    +(1.00,3.00) -- +(0.00,3.00)
    +(-1.00,1.500) -- +(0.00,0.00)
    +(-1.00,1.500) -- +(0.00,1.00)
    +(-1.00,1.500) -- +(0.00,2.00)
    +(-1.00,1.500) -- +(0.00,3.00)
    +(-5.00,0.00) node{}
    +(-5.00,1.00) node{}
    +(-5.00,2.00) node{}
    +(-5.00,3.00) node{}
    +(-4.00,0.00) node{}
    +(-4.00,1.00) node{}
    +(-4.00,2.00) node{}
    +(-4.00,3.00) node{}
    +(-3.00,1.500) node{}
    +(-2.00,1.500) node{}
    +(-1.00,1.500) node{}

    +(1.00,0.00) node{}
    +(1.00,1.00) node{}
    +(1.00,2.00) node{}
    +(1.00,3.00) node{}
    +(0.00,0.00) node{}
    +(0.00,1.00) node{}
    +(0.00,2.00) node{}
    +(0.00,3.00) node{}

  };
\end{tikzpicture}
\end{center}
\vskip -0.6 cm \caption{A subdivided double star.} \label{f:F5}
\end{figure}
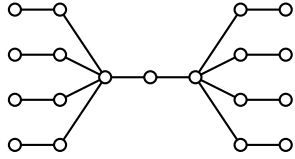

Let $\mathcal{T}$ be the family of trees $T_{k,j}$ of order $k \ge 2$ where $k \ge 2j +1$ and $j \ge 0$, obtained from a star by subdividing $j$ edges
exactly once. The tree $T_{12,4}$ is shown in Figure 2.

\begin{figure}[htb]
\tikzstyle{every node}=[circle, draw, fill=black!0, inner sep=0pt,minimum width=.16cm]
\begin{center}
\begin{tikzpicture}[thick,scale=.6]
  \draw(0,0) { 
    +(3.70,2.50) -- +(1.20,1.00)
    +(3.70,2.50) -- +(2.20,1.00)
    +(3.70,2.50) -- +(3.20,1.00)
    +(3.20,1.00) -- +(3.20,0.00)
    +(2.20,1.00) -- +(2.20,0.00)
    +(1.20,1.00) -- +(1.20,0.00)
    +(3.70,2.50) -- +(0.20,1.00)
    +(0.20,1.00) -- +(0.20,0.00)
    +(3.70,2.50) -- +(4.20,1.00)
    +(3.70,2.50) -- +(5.20,1.00)
    +(3.70,2.50) -- +(6.20,1.00)
    +(3.70,2.50) node{}
    +(0.20,1.00) node{}
    +(1.20,1.00) node{}
    +(2.20,1.00) node{}
    +(3.20,1.00) node{}
    +(4.20,1.00) node{}
    +(5.20,1.00) node{}
    +(6.20,1.00) node{}
    +(0.20,0.00) node{}
    +(1.20,0.00) node{}
    +(2.20,0.00) node{}
    +(3.20,0.00) node{}

  };
\end{tikzpicture}
\end{center}
\vskip -0.6 cm \caption{A subdivided star.} \label{f:F5}
\end{figure}
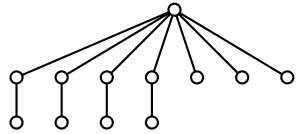

They proved that;

\begin{theorem} \emph{(\cite{hk}, \label{theo-henning} Theorem 6)} Let $T$ be a non-trivial tree. Then, $$\gamma_{\{R2\}}(T)=\gamma(T)+1\ \emph{if}\ \emph{and}\ \emph{only}\ \emph{if}\ T\in \mathcal{T} \cup \mathfrak{F}.$$
\end{theorem}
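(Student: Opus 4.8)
\emph{Proof idea.}
We argue the two implications separately; the ``if'' direction is a finite stock of explicit computations, and the ``only if'' direction is the substantial part. For the forward direction, the plan is to treat the two families in turn. For $T=T_{k,j}\in\mathcal T$ one first records that $\gamma(T)=j$ when $k=2j+1$ (a spider whose legs all have length $2$) and $\gamma(T)=j+1$ otherwise, and then exhibits the R2DF that assigns $1$ to the far end of each subdivided leg, assigns the central vertex the value $1$ when $k=2j+1$ and $2$ when an un-subdivided leaf is present, and $0$ elsewhere; its weight is exactly $\gamma(T)+1$. For $T=F_{r,s}\in\mathfrak F$ one has $\gamma(T)=r+s+1$, and the R2DF putting $1$ on the two central vertices and on every leg-end (and $0$ elsewhere, in particular $0$ on the subdivision vertex of the central edge) again has weight $\gamma(T)+1$. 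It then remains to see that no R2DF of $T$ has weight $\gamma(T)$: each subdivided leg must receive total weight at least $1$ to cover its pendant leaf, each un-subdivided leaf forces weight $1$ on itself or weight $2$ on the centre, and if all leg-weights are exactly $1$ some central vertex ends up a $0$-vertex whose neighbourhood sum is only $1$, a contradiction. Hence $\gamma_{\{R2\}}(T)=\gamma(T)+1$ for every $T\in\mathcal T\cup\mathfrak F$.

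For the ``only if'' direction, let $f=(V_0,V_1,V_2)$ be a $\gamma_{\{R2\}}(T)$-function with $w(f)=\gamma(T)+1$. By Lemma \ref{lemma111} we may assume $|V_2|\le 1$ and that $V_1\cup V_2$ is independent. Since $D:=V_1\cup V_2$ dominates $T$ and $|D|=w(f)-|V_2|=\gamma(T)+1-|V_2|$, there are exactly two configurations: either $|V_2|=1$, so that $D$ is an \emph{independent minimum} dominating set; or $V_2=\emptyset$, so that $|V_1|=\gamma(T)+1$, $V_1$ is an independent dominating set, and every vertex of $V_0$ has at least two neighbours in $V_1$. The first step is a structural lemma bounding the diameter: $\operatorname{diam}(T)\le 4$ in the former configuration and $\operatorname{diam}(T)\le 6$ in the latter. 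I would prove this by contradiction via an exchange argument on a longest path: a value-$0$ pendant at the far end would force two value-$1$ neighbours onto a vertex that is itself redundant, allowing one to trim a length-two branch and obtain either a lighter R2DF (contradicting optimality of $f$) or a dominating set of size less than $\gamma(T)$.

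With the diameter under control, the two configurations are resolved as follows. When $|V_2|=1$, say $V_2=\{z\}$, all neighbours of $z$ lie in $V_0$; using $\operatorname{diam}(T)\le 4$ together with the fact that $\{z\}\cup V_1$ is a minimum dominating set, one checks that $z$ is adjacent to every vertex at distance $1$ and reaches every remaining vertex through a private path of length $2$, so that $T$ is exactly a star with some edges subdivided once, i.e. $T=T_{k,j}\in\mathcal T$ with $k\ge 2j+1$. When $V_2=\emptyset$ I would proceed as in the proof of Corollary \ref{cor5}: locate $v\in V_0$ whose set of $V_1$-neighbours is exactly $\{u,w\}$ — such $v$ exists unless the order of $T$ is small enough to be treated by inspection, since otherwise the count of $V_0$–$V_1$ edges would exceed $|E(T)|=n-1$ — so that $(V_1\setminus\{u,w\})\cup\{v\}$ is a minimum dominating set; then analyzing $N(u)$, $N(w)$ and the pendant subtrees rooted there by means of the ``at least two neighbours in $V_1$'' property and the diameter bound shows that $T$ is a double star with every edge subdivided once, i.e. $T=F_{r,s}\in\mathfrak F$, apart from degenerate cases that collapse to a subdivided star in $\mathcal T$.

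The main obstacle is the ``only if'' direction, and in particular the interplay between the diameter bound and the subsequent pinning-down of the global structure. The cleanest way to organize this is via the contrapositive ``$T\notin\mathcal T\cup\mathfrak F$ implies $\gamma_{\{R2\}}(T)\ge\gamma(T)+2$'', which I would prove by producing, from an arbitrary R2DF $f$ of $T$, a dominating set of cardinality at most $w(f)-2$ through a rooted-tree peeling argument that exhausts the possible local configurations around the support vertices. By contrast, the ``if'' direction and the reductions supplied by Lemma \ref{lemma111} are routine.
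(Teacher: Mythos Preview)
The paper does not prove this theorem at all: it is quoted verbatim from Henning and Klostermeyer \cite{hk} (their Theorem~6) and used as a black box to derive Theorem~\ref{the13}. So there is no ``paper's own proof'' to compare against; the authors simply cite the result.

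As for your sketch itself: the ``if'' direction is correct and your computations of $\gamma(T_{k,j})$, $\gamma(F_{r,s})$ and the exhibited R2DFs are right. Invoking Lemma~\ref{lemma111} to start the ``only if'' direction is legitimate (that lemma is proved independently in the paper and does not rely on the cited theorem), and your dichotomy $|V_2|=1$ versus $V_2=\emptyset$ with the resulting counts $|D|=\gamma(T)$ and $|V_1|=\gamma(T)+1$ is the natural first reduction. However, the remainder is not a proof but a plan: the diameter bounds (``$\operatorname{diam}\le 4$'' and ``$\operatorname{diam}\le 6$'') are asserted via an unspecified exchange argument, and the step from ``small diameter plus the local degree conditions'' to ``$T$ is exactly a subdivided star or subdivided double star'' is where all the work lies and is only gestured at. In particular, a tree of diameter $6$ in which every $V_0$-vertex has two $V_1$-neighbours need not \emph{a priori} be a subdivided double star; ruling out extra branches off the intermediate vertices requires a careful case analysis (or the rooted peeling you mention in your last paragraph, which you do not carry out). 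If you want a self-contained proof rather than a citation, that structural analysis is the part you would actually have to write.
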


Now we characterize all trees $T$ of order $n\geq2$ for which $i_{\{R2\}}(T)=i(T)+1$. We deduce this result
from Lemma \ref{lemma111}, Corollary \ref{cor5} and Theorem \ref{theo-henning}.

\begin{theorem} \label{the13} Let $T$ be a non-trivial tree. Then, $$i_{\{R2\}}(T)=i(T)+1\ \emph{if}\ \emph{and}\ \emph{only}\ \emph{if}\ T\in \mathcal{T} \cup \mathfrak{F}.$$
\end{theorem}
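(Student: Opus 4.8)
The plan is to deduce this characterization from the three results the authors have just assembled, using a two-directional argument. The easy direction is $(\Leftarrow)$: if $T \in \mathcal{T}\cup\mathfrak{F}$, then by Theorem \ref{theo-henning} we have $\gamma_{\{R2\}}(T)=\gamma(T)+1$, so Corollary \ref{cor5} applies and gives immediately $i_{\{R2\}}(T)=\gamma_{\{R2\}}(T)=\gamma(T)+1$ together with $i(T)=\gamma(T)$; chaining these equalities yields $i_{\{R2\}}(T)=i(T)+1$, as desired.

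For the $(\Rightarrow)$ direction, suppose $i_{\{R2\}}(T)=i(T)+1$. The goal is to force $\gamma_{\{R2\}}(T)=\gamma(T)+1$ and then invoke Theorem \ref{theo-henning}. First I would record the general chain of inequalities $\gamma_{\{R2\}}(T)\le i_{\{R2\}}(T)$ and $\gamma(T)\le i(T)$, together with the standard bound $\gamma_{\{R2\}}(T)\ge \gamma(T)+1$ for any non-trivial graph (which also follows from Theorem \ref{the12} applied with $\gamma$ in place of $i$, or is a known fact; in fact $\gamma_{\{R2\}}(G)\ge\gamma(G)+1$ holds whenever $G$ has an edge). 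Combining,
$$\gamma(T)+1 \le \gamma_{\{R2\}}(T) \le i_{\{R2\}}(T) = i(T)+1.$$
So $\gamma(T)\le \gamma_{\{R2\}}(T)-1 \le i(T)$, and if I can show $i(T)\le \gamma_{\{R2\}}(T)-1$ as well, then all three of $\gamma(T)$, $\gamma_{\{R2\}}(T)-1$, $i(T)$ coincide, which gives $\gamma_{\{R2\}}(T)=\gamma(T)+1$ and we are done by Theorem \ref{theo-henning}. The inequality $i(T)\le \gamma_{\{R2\}}(T)-1$ should come from taking a $\gamma_{\{R2\}}(T)$-function $f=(V_0,V_1,V_2)$ and running exactly the argument of Theorem \ref{the12} (Cases 1 and 2 there produce an independent dominating set of weight at most $w(f)-1$); the proof of Theorem \ref{the12} nowhere uses that $f$ is an \emph{independent} R2DF, only that it is a minimum-weight R2DF, so it transfers verbatim with $i_{\{R2\}}$ replaced by $\gamma_{\{R2\}}$.

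Alternatively, and perhaps more cleanly, one can bypass the inequality juggling: from $i_{\{R2\}}(T)=i(T)+1$ and $i(T)\ge\gamma(T)$ and $i_{\{R2\}}(T)\ge\gamma_{\{R2\}}(T)\ge\gamma(T)+1$ we get $\gamma(T)+1\le i(T)+1$, hence $\gamma_{\{R2\}}(T)\le i(T)+1=i_{\{R2\}}(T)$, and also $\gamma_{\{R2\}}(T)\ge\gamma(T)+1$. To pin $\gamma_{\{R2\}}(T)=\gamma(T)+1$ exactly, I would argue that $i(T)=\gamma(T)$ must hold: if $i(T)>\gamma(T)$, then $i(T)\ge\gamma(T)+1$, so $i_{\{R2\}}(T)=i(T)+1\ge\gamma(T)+2$; but then I would show directly that $i_{\{R2\}}(T)=i(T)+1$ still forces, via the Theorem \ref{the12} machinery applied to a $\gamma_{\{R2\}}$-function, that $\gamma_{\{R2\}}(T)\le i(T)$, contradicting... — this branch is more delicate, so I would prefer the first route. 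The main obstacle is precisely this point: ensuring that equality $i_{\{R2\}}(T)=i(T)+1$ really does collapse the whole chain, i.e. that we cannot have $\gamma_{\{R2\}}(T)=\gamma(T)+1$ fail while $i_{\{R2\}}(T)=i(T)+1$ holds with $i(T)>\gamma(T)$. Resolving it cleanly relies on the observation that the construction in Theorem \ref{the12} yields an independent dominating set from \emph{any} minimum R2DF, giving $i(T)\le\gamma_{\{R2\}}(T)-1$ unconditionally; feeding this into $\gamma(T)+1\le\gamma_{\{R2\}}(T)\le i(T)+1\le\gamma_{\{R2\}}(T)$ squeezes everything and finishes the proof.
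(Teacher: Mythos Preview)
Your $(\Leftarrow)$ direction is correct and is exactly what the paper intends when it says the result follows from Lemma~\ref{lemma111}, Corollary~\ref{cor5}, and Theorem~\ref{theo-henning}; indeed, those three results together yield only this implication.

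The $(\Rightarrow)$ direction, however, has a genuine gap. Your key step is the claim that ``the proof of Theorem~\ref{the12} nowhere uses that $f$ is an \emph{independent} R2DF,'' from which you want to extract $i(T)\le \gamma_{\{R2\}}(T)-1$ for every non-trivial tree. But both cases of that proof do use independence: in Case~1 the set $S=\{v:f(v)\neq 0\}$ is an independent dominating set only because $f$ is an IR2DF, and Case~2 explicitly invokes independence to force $f(u)=0$ at every support vertex $u$. Worse, the inequality you are after is actually false. Take the tree $T$ obtained by joining the centres of two copies of $K_{1,3}$ with an edge (eight vertices in all). A direct check gives $\gamma(T)=2$, $\gamma_{\{R2\}}(T)=4$, $i(T)=4$, and $i_{\{R2\}}(T)=5$. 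Hence $i(T)=\gamma_{\{R2\}}(T)$, so $i(T)\le \gamma_{\{R2\}}(T)-1$ fails here; moreover this same $T$ satisfies $i_{\{R2\}}(T)=i(T)+1$ while clearly $T\notin\mathcal{T}\cup\mathfrak{F}$, so the $(\Rightarrow)$ implication of the theorem as stated appears to be false. The paper offers no separate argument for this direction beyond citing the three results, and those results only deliver $(\Leftarrow)$, so the obstacle you ran into reflects a defect in the statement rather than in your approach.
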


Theorem \ref{theo7}, Proposition \ref{prop-3} and Theorem \ref{the12} yield the following for trees.

\begin{corollary}\label{cor6}
For any tree $T$ of order $n\geq 2$, $2i(T)+1\leq i_{dR}(T)\leq3i(T)$.
\end{corollary}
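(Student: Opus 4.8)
The plan is to obtain Corollary~\ref{cor6} by simply stitching together three results already established in the paper, so the ``proof'' is a short deductive chain rather than a new argument. First I would invoke Theorem~\ref{theo7}, which gives $i_{dR}(T)\geq i_{\{R2\}}(T)+i(T)$ for any connected graph; since a tree of order $n\geq 2$ is connected, this applies to $T$. Then I would apply Theorem~\ref{the12}, which states $i_{\{R2\}}(T)\geq i(T)+1$ for every tree of order $n\geq 2$. Substituting the second inequality into the first yields
\[
i_{dR}(T)\;\geq\; i_{\{R2\}}(T)+i(T)\;\geq\;\bigl(i(T)+1\bigr)+i(T)\;=\;2i(T)+1,
\]
which is the claimed lower bound.

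For the upper bound I would appeal directly to Proposition~\ref{prop-3}, whose statement $2i(G)\leq i_{dR}(G)\leq 3i(G)$ holds for \emph{any} graph $G$; in particular the right-hand inequality $i_{dR}(T)\leq 3i(T)$ holds for the tree $T$. Combining the two displays gives $2i(T)+1\leq i_{dR}(T)\leq 3i(T)$, completing the argument. (One should note that the upper bound from Proposition~\ref{prop-3} is strictly better here than what one would get by feeding Theorem~\ref{theo7} or the $\beta'$-bound into the same machinery, so it is worth citing Proposition~\ref{prop-3} explicitly rather than trying to derive the upper bound from the $i_{\{R2\}}$ relations.)

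There is essentially no obstacle: every ingredient is a black box supplied earlier in the excerpt, and the only thing to check is that the hypotheses line up, namely that ``tree of order $n\geq 2$'' is covered by ``connected graph'' (Theorem~\ref{theo7}) and by ``any graph'' (Proposition~\ref{prop-3}), and that Theorem~\ref{the12} is quoted under exactly its own hypothesis. The small edge cases $n=2$ (the path $P_2$, where $i(P_2)=1$ and $i_{dR}(P_2)=3$, consistent with $2\cdot 1+1\le 3\le 3\cdot 1$) are already subsumed by those theorems, so no separate verification is needed. If anything, the one place to be slightly careful is to make sure the lower-bound chain does not accidentally claim equality anywhere; it does not, since both inequalities used may be strict.

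\begin{proof}
Let $T$ be a tree of order $n\geq 2$; then $T$ is connected. By Theorem~\ref{theo7},
\[
i_{dR}(T)\;\geq\; i_{\{R2\}}(T)+i(T),
\]
and by Theorem~\ref{the12}, $i_{\{R2\}}(T)\geq i(T)+1$. Combining these,
\[
i_{dR}(T)\;\geq\; i_{\{R2\}}(T)+i(T)\;\geq\;\bigl(i(T)+1\bigr)+i(T)\;=\;2i(T)+1 .
\]
For the upper bound, Proposition~\ref{prop-3} applied to the graph $T$ gives $i_{dR}(T)\leq 3i(T)$. Hence $2i(T)+1\leq i_{dR}(T)\leq 3i(T)$.
\end{proof}
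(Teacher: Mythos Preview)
Your proof is correct and follows exactly the approach indicated in the paper, which simply notes that Theorem~\ref{theo7}, Proposition~\ref{prop-3}, and Theorem~\ref{the12} together yield the corollary. The chain $i_{dR}(T)\geq i_{\{R2\}}(T)+i(T)\geq (i(T)+1)+i(T)=2i(T)+1$ for the lower bound and the direct citation of Proposition~\ref{prop-3} for the upper bound are precisely what the paper intends.
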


Our final result in this section shows that every value in the range of Corollary \ref{cor6} is realizable for trees, that is, all values between the lower and upper bounds of Corollary \ref{cor6} are realizable. We first recall that the {\em corona} $G\circ K_1$ of a graph $G$ is formed from $G$ by adding a new vertex $w$ and edge $vw$ for each vertex $v \in V(G)$.

\begin{theorem}
An ordered pair $(a,b)$ is realizable as the IDN and IDRDN of some
non-trivial tree if and only if $2a + 1 \le b \le 3a$.
\end{theorem}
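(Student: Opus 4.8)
The plan is to prove the two directions separately. For the \emph{necessity} direction, we already have Corollary \ref{cor6}, which gives $2i(T)+1 \le i_{dR}(T) \le 3i(T)$ for every non-trivial tree $T$; so if $(a,b)$ is realized by some tree then automatically $2a+1 \le b \le 3a$. Nothing more is needed here.

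For the \emph{sufficiency} direction, given integers $a,b$ with $2a+1 \le b \le 3a$, I would construct a tree $T$ with $i(T)=a$ and $i_{dR}(T)=b$. The natural building blocks are trees whose IDRDN sits at the two extremes of the range. A star $K_{1,m}$ (with $m\ge 2$) has $i=1$ and $i_{dR}=3$, realizing the top of the range with a single ``private'' independent-dominating vertex costing $3$. A path $P_3$ has $i=1$ and $i_{dR}=2$ by Proposition \ref{prop-2} — here one vertex in the independent dominating set ``costs'' only $2$ because its two leaf-neighbours force a weight-$2$ assignment that still dominates (this is exactly the $T_{k,j}$-type behaviour). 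The idea is to take a disjoint union of $a$ such gadgets and then link them into a single tree by a path/edges through their centers in a way that does not change $i$ or $i_{dR}$: if $b-2a = t$ with $0 \le t \le a-1$ actually we need $1 \le t \le a$, then we use $t$ copies of the star-type gadget (each contributing $3$ to $i_{dR}$ and $1$ to $i$) and $a-t$ copies of the $P_3$-type gadget (each contributing $2$ to $i_{dR}$ and $1$ to $i$), so the total is $i = a$ and $i_{dR} = 3t + 2(a-t) = 2a + t = b$, and $t$ ranges over $\{1,\dots,a\}$ exactly covering $2a+1 \le b \le 3a$.

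The main technical work — and the step I expect to be the main obstacle — is showing that the gadgets can be \emph{glued into one tree} without perturbing the two parameters. Concretely, I would attach each gadget's center to a common new central vertex $c$ (or string the centers along a path), and then argue: (i) an optimal independent dominating set can be taken to consist of the $a$ gadget-centers, still dominating $c$, so $i(T) \le a$, while a counting/locality argument (each gadget must ``pay'' at least one vertex for its own leaves) gives $i(T) \ge a$; and (ii) for the IDRDF, assign weight $3$ to the centers of the star-gadgets, handle the $P_3$-gadgets with the weight-$2$-on-the-center pattern, give $c$ weight $0$ (it is dominated by an adjacent weight-$3$ center provided $t \ge 1$, which is why we need $t\ge 1$, i.e. $b \ge 2a+1$), yielding $i_{dR}(T) \le b$; and conversely a lower bound argument, restricting any IDRDF to each gadget's leaf-neighbourhood, shows each star-gadget contributes at least $3$ and each $P_3$-gadget at least $2$, giving $i_{dR}(T) \ge b$.

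I would organize the write-up as: (1) state the construction precisely with a figure, distinguishing the ``expensive'' component $K_{1,2}$ (or a larger star) and the ``cheap'' component $P_3$, all joined at a hub; (2) verify $i(T)=a$; (3) verify $i_{dR}(T)=b$ via matching upper and lower bounds; (4) note the boundary cases $t=a$ (all stars, $b=3a$) and $t=1$ (one star, $b=2a+1$) work, and invoke Corollary \ref{cor6} for necessity. The only subtlety to watch is that the hub $c$ must be dominated and double-Roman-dominated ``for free'' by an adjacent weight-$3$ vertex, which is guaranteed precisely because at least one star-gadget is present; this is the structural reason the lower end of the realizable range is $2a+1$ rather than $2a$.
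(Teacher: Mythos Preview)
Your necessity direction is fine; the gap is in the construction for sufficiency. The claim that $i_{dR}(P_3)=2$ is simply false: Proposition~\ref{prop-2} gives $i_{dR}(P_3)=3$ (since $3\equiv 0\pmod 3$), and more concretely, the assignment ``weight $2$ on the centre, weight $0$ on the two leaves'' is \emph{not} a DRDF at all. A weight-$0$ vertex needs either a weight-$3$ neighbour or two weight-$2$ neighbours, and a leaf of $P_3$ has only one neighbour. This error propagates through the whole construction: after attaching your $P_3$-gadgets to the hub by their centres and assigning weight $2$ to those centres, each pendant leaf of such a gadget still has a unique neighbour of weight $2$, so the function you describe is not an IDRDF and the lower-bound count ``each $P_3$-gadget contributes at least $2$'' cannot be matched from above. (Incidentally, $K_{1,2}=P_3$, so your ``expensive'' and ``cheap'' components are the same graph.)

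The paper's construction is close in spirit to what you had in mind but gets the gadgets right. The ``cheap'' piece is a pendant $P_2$ (not $P_3$) hanging off a hub, with weight $2$ on its leaf; the hub itself carries weight $2$, so the interior vertex of the $P_2$ has two weight-$2$ neighbours and may safely be $0$. The ``expensive'' piece is a pendant $P_3$ with weight $3$ on its support vertex. Because the hub now costs an extra $2$, this scheme realises the range $2a+2\le b\le 3a$ (concretely, start from a subdivided star $K^*_{1,a}$ and lengthen $b-2a-2$ of its branches by one vertex). The endpoint $b=2a+1$ genuinely requires a different example; the paper uses the corona $K_{1,a-1}\circ K_1$. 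Your ``at least one star-gadget present'' heuristic does not explain the $+1$: the real reason is that the hub cannot be double-Roman-dominated for free, so it contributes $2$ on top of the $2a$ coming from the branches, and one must change the construction to hit $2a+1$.
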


\begin{proof}
Let $T$ be a tree with $i (T)=a$ and $i_{dR}(T)=b$. By Corollary \ref{cor6}, $2a+1 \le b \le 3a$.
Next we show that each ordered pair is realizable. For $b=2a+1$, consider the corona of the star $K_{1,t}$, for $t \ge 1$. We assign the value $1$ to all support vertices other than the center and also assign value $1$ to the leaf neighbor of the center. It is
straightforward to check that $i(K_{1,t} \circ K_1) = t + 1$. For the IDRDN we assign the value $2$ to all leaves other than the leaf neighbor of the center and the value $3$ to the center. It is easy to see that $i_{dR}(K_{1,t} \circ K_1) = 2t + 3 = 2i(K_{1,t} \circ K_1) + 1$.
Assume now that $b\ge 2a +2$. Let $T$ be the tree formed from a subdivided star
$K^*_{1,a}$  by choosing $b - (2a + 2)$ support
vertices of $K^*_{1,a}$ and adding another vertex as a neighbor of leaf of each of them. Thus, $T$ has $b-2a-2$ vertices other than the center which are neither support vertices nor leaves. Again,
it is straightforward to check that $i(T)=a$ (the set of support vertices form an $i(T)$-set). To see that $i_{dR}(T ) = b$, note
that each of the $b-2a-2$ new support vertices must be assigned a value $3$ under any $i_{dR}(T)$-function. Assigning a value $2$ to the leaf non-adjacent to the new support vertices  and the center of $T$ and $0$ to the other vertices. It is simple to check that this function is in
fact of the minimum weight. Hence, we have $i_{dR}(T) = 3(b-2a-2)+2(a-(b-(2a+2)))+2 =b$, as desired. This completes the proof.
\end{proof}


\end{document}